\newcounter{todocounter}
\DeclareDocumentCommand\addreference{g}{\stepcounter{todocounter}\todo[color = blue!30]{\thetodocounter. Add reference\IfNoValueF{#1}{: #1}}\xspace}
\DeclareDocumentCommand\checkthis{g}{\stepcounter{todocounter}\todo[color = red!50]{\thetodocounter. Check this\IfNoValueF{#1}{: #1}}\xspace}
\DeclareDocumentCommand\fixthis{g}{\stepcounter{todocounter}\todo[color = orange!50]{\thetodocounter. Fix this\IfNoValueF{#1}{: #1}}\xspace}
\DeclareDocumentCommand\expand{g}{\stepcounter{todocounter}\todo[color = green!50]{\thetodocounter. Expand\IfNoValueF{#1}{: #1}}\xspace}
\declaretheoremstyle[
  spaceabove = 3pt,
  spacebelow = 3pt,
  bodyfont = \itshape,
]{first}
\declaretheoremstyle[
  spaceabove = 3pt,
  spacebelow = 3pt,
]{second}
\declaretheorem[numberwithin=section, style=first]{theorem}
\declaretheorem[sibling=theorem, style=first]{corollary}
\declaretheorem[sibling=theorem, style=first]{lemma}
\declaretheorem[sibling=theorem, style=first]{proposition}
\declaretheorem[sibling=theorem, style=second]{example}
\declaretheorem[sibling=theorem, style=second]{remark}
\declaretheorem[sibling=theorem, style=second]{definition}
\Crefname{assumption}{Assumption}{Assumptions}
\Crefname{convention}{Convention}{Conventions}
\Crefname{setup}{Setup}{Setups}
\declaretheorem[numberwithin=section, style=first, title=Theorem]{alphatheorem}
\declaretheorem[sibling=alphatheorem, style=first, title=Conjecture]{alphaconjecture}
\declaretheorem[sibling=alphatheorem, style=first, title=Corollary]{alphacorollary}
\crefname{alphatheorem}{Theorem}{Theorems}
\crefname{alphaconjecture}{Conjecture}{Conjectures}
\crefname{alphacorollary}{Corollary}{Corollaries}
\crefname{alphaproposition}{Proposition}{Propositions}
\DeclareDocumentCommand\note{g}{\todo[color = green!50]{#1}\xspace}
\mathchardef\mhyphen="2D
\newcommand\dash{\nobreakdash-\hspace{0pt}}
\DeclareMathOperator\characters{X}
\DeclareMathOperator\derived{\mathbf{D}}
\DeclareMathOperator\End{End}
\DeclareMathOperator\Hom{Hom}
\DeclareMathOperator\Ext{Ext}
\DeclareMathOperator\GL{GL}
\DeclareMathOperator\HH{H}
\DeclareMathOperator\hochschild{HH}
\DeclareMathOperator\Mat{Mat}
\DeclareMathOperator\Pic{Pic}
\DeclareMathOperator\Hilb{Hilb}
\DeclareMathOperator\rk{rk}
\DeclareMathOperator\dimvect{\underline{\operatorname{dim}}}
\DeclareMathOperator\sheafHom{\mathcal{H}om}
\DeclareMathOperator\RsheafHom{\mathbf{R}\mathcal{H}om}
\DeclareMathOperator\weight{wt}
\newcommand\cocharacters{\ensuremath{\mathrm{X}_*}}
\newcommand\bounded{\ensuremath{\mathrm{b}}}
\newcommand\Gm{\ensuremath{\mathbf{G}_{\mathrm{m}}}}
\newcommand\can{\ensuremath{\mathrm{can}}}
\newcommand\HN{\ensuremath{\mathrm{HN}}}
\newcommand\normal{\ensuremath{\mathrm{N}}}
\newcommand\tangent{\ensuremath{\mathrm{T}}}
\newcommand\tuple[1]{\ensuremath{\mathbf{#1}}}
\DeclareMathOperator\Rep{R}
\DeclareMathOperator\PG{PG}
\DeclareDocumentCommand\modulistack{om}{\IfNoValueTF{#1}{\mathcal{M}{(#2)}}{\mathcal{M}^{#1}(#2)}}
\DeclareDocumentCommand\modulispace{om}{\IfNoValueTF{#1}{\mathrm{M}{(#2)}}{\mathrm{M}^{#1}(#2)}}
\DeclareDocumentCommand\repspace{om}{\IfNoValueTF{#1}{\mathrm{R}{(#2)}}{\mathrm{R}^{#1}(#2)}}
\newcommand\moduli{\ensuremath{\mathrm{M}}}
\DeclareMathOperator\source{s}
\DeclareMathOperator\target{t}
\newcommand\semistable{\ensuremath{\mhyphen\mathrm{sst}}}
\newcommand\stable{\ensuremath{\mhyphen\mathrm{st}}}
\newcommand\semisimple{\ensuremath{\mathrm{ssimp}}}
\newcommand\field{\mathbf{k}}
\DeclareMathOperator\Spec{Spec}
\newcommand\blade[1][]{\Sigma_{#1}}
\newcommand\limitset[1][\lambda]{Z_{#1}}
\newcommand\diff{\mathrm{d} \kern 0pt}
\newcommand\group[1][\tuple{d}]{\mathrm{G}_{#1}}
\newif\ifcode
\title{Rigidity and Schofield's partial tilting conjecture \\ for quiver moduli}
\author{Pieter Belmans \and Ana-Maria Brecan \and Hans Franzen \and Gianni Petrella \and Markus Reineke}
\begin{document}
\maketitle

%\gitfootnote{commit: \texttt{\gitAbbrevHash}\hfil date: \texttt{\gitAuthorIsoDate}\hfil \texttt{\gitReferences}}

\begin{abstract}
  We explain how Teleman quantization can be applied
  to moduli spaces of quiver representations,
  in order to compute the higher cohomology of the endomorphism bundle of the universal bundle.
  We use this
  to prove Schofield's partial tilting conjecture,
  and to show that moduli spaces of quiver representations are (infinitesimally) rigid as varieties.
\end{abstract}

\tableofcontents

\section{Introduction}
\label{section:introduction}
There exists a rich moduli theory for representations of a quiver~$Q=(Q_0,Q_1)$,
surveyed in~\cite{MR2484736}.
Following King~\cite{MR1315461},
one fixes a dimension vector~$\tuple{d}\in\mathbb{N}^{Q_0}$
and a stability parameter~$\theta\in\Hom(\mathbb{Z}^{Q_0},\mathbb{Z})$,
and constructs a moduli space~$\moduli^{\theta\stable}(Q,\tuple{d})$ as a GIT quotient,
parametrizing isomorphism classes of~$\theta$-stable representations
of dimension vector~$\tuple{d}$.

When~$\tuple{d}$ and~$\theta$ are chosen appropriately,
the moduli space~$\moduli^{\theta\stable}(Q,\tuple{d})$
is a \emph{fine} moduli space,
parametrizing isomorphism classes of~$\theta$-(semi)stable representations.
It comes with a universal bundle~$\mathcal{U}$,
whose fiber in a point~$[V]\in\moduli^{\theta\stable}(Q,\tuple{d})$
is the representation~$V$.
The universal bundle~$\mathcal{U}$ decomposes as a direct sum~$\bigoplus_{i\in Q_0}\mathcal{U}_i$.
These constructions and results will be recalled in \cref{section:moduli}.

In this paper
we are interested in~$\sheafHom(\mathcal{U},\mathcal{U})\cong\mathcal{U}^\vee\otimes\mathcal{U}\cong\bigoplus_{i,j\in Q_0}\mathcal{U}_i^\vee\otimes\mathcal{U}_j$,
and we prove the following result on the summands,
which can be equivalently stated by saying that~$\sheafHom(\mathcal{U},\mathcal{U})$
has no higher cohomology.
The condition of being strongly amply~$\theta$-stable
is given in \cref{definition:strong-ample-stability}.

\begin{alphatheorem}
  \label{theorem:cohomology-vanishing}
  Let~$Q$ be a quiver,
  let~$\tuple{d}$ be a dimension vector,
  and let~$\theta$ be a stability parameter
  such that~$\tuple{d}$ is~$\theta$\dash coprime.
  Assume furthermore that~$\tuple{d}$ is strongly amply~$\theta$\dash stable.
  Then, for all~$i,j \in Q_0$ we have
  \begin{equation}
    \label{equation:cohomology-vanishing}
    \HH^{\geq1}(\moduli^{\theta\stable}(Q,\tuple{d}), \mathcal{U}^\vee_i \otimes \mathcal{U}_j) = 0,
  \end{equation}
  where the $\mathcal{U}_i$ are the summands of
  the universal representation on the moduli space.
\end{alphatheorem}

For this result we do not have to assume that~$Q$ is acyclic.

We will prove this using \emph{Teleman quantization},
a tool that allows to compute sheaf cohomology on a GIT~quotient
using sheaf cohomology on the quotient stack which includes the unstable locus~\cite{MR1792291}.
To do so, we recall in \cref{section:moduli}
how~$\moduli^{\theta\stable}(Q,\tuple{d})$ is constructed as a quotient
of the Zariski-open~$\repspace[\theta\stable]{Q,\tuple{d}}\subset\repspace{Q,\tuple{d}}$,
and how the bundles~$\mathcal{U}_i^\vee\otimes\mathcal{U}_j$ are the descent
of bundles which are defined on the entire~$\repspace{Q,\tuple{d}}$.
Thus we are in a position to check the conditions for Teleman quantization.

These conditions are checked in \cref{section:teleman-for-quiver-moduli}.
We determine the width of the windows in \cref{subsection:width},
and we determine the weights of the bundles~$\mathcal{U}_i^\vee\otimes\mathcal{U}_j$ in \cref{subsection:weights-endomorphism-bundle}.
The proof of \cref{theorem:cohomology-vanishing}
thus reduces to finding appropriate conditions on~$Q$, $\tuple{d}$ and~$\theta$
for which the inequality~\eqref{equation:teleman-inequality} in the statement of Teleman quantization holds,
which is done in \cref{subsection:cohomology-vanishing}.
We illustrate the setup for Teleman quantization in \cref{example:kronecker-moduli-space}.

We will now discuss three applications of this cohomology vanishing.

\paragraph{Schofield's conjecture}
The following conjecture %
is attributed to Schofield~\cite[page~80]{MR1428456}.
\begin{alphaconjecture}[Schofield]
  \label{conjecture:schofield}
  Let~$Q$ be an acyclic quiver,
  and~$\tuple{d}$ a dimension vector.
  Let~$\theta$ be a stability parameter
  such that~$\modulispace[\theta\stable]{Q,\tuple{d}}$
  is a smooth projective variety.
  Then the universal representation~$\mathcal{U}$
  is a partial tilting bundle on~$\modulispace[\theta\stable]{Q,\tuple{d}}$,
  i.e.,
  \begin{equation}
    \Ext_{\modulispace[\theta\stable]{Q,\tuple{d}}}^{\geq 1}(\mathcal{U},\mathcal{U})=0.
  \end{equation}
\end{alphaconjecture}
There is a second part to the conjecture,
which states that
this partial tilting bundle can be completed to a tilting bundle.
We will not address the second part in this paper.

The result in \cref{theorem:cohomology-vanishing} thus settles Schofield's conjecture for a large class of quiver moduli.
\begin{alphacorollary}
  \label{corollary:schofield}
  Let~$Q$ be an acyclic quiver,
  let~$\tuple{d}$ be a dimension vector,
  and let~$\theta$ be a stability parameter,
  such that~$\tuple{d}$ is~$\theta$-coprime.
  Assume furthermore that $\tuple{d}$ is strongly amply~$\theta$-stable.
  Then~$\mathcal{U}$ is a partial tilting bundle.
\end{alphacorollary}

This makes it an interesting question to compute the endomorphism algebra~$\End_{\modulispace[\theta\stable]{Q,\tuple{d}}}(\mathcal{U},\mathcal{U})$,
and thus understand whether the functor
\begin{equation}
  \label{equation:fully-faithful-functor}
  \Phi_{\mathcal{U}}\colon
  \derived^\bounded(\field Q)\to\derived^\bounded(\modulispace[\theta\stable]{Q,\tuple{d}}):
  V\mapsto\RsheafHom_{\mathcal{O}_XQ}(\mathcal{U},-\otimes\mathcal{O}_{\modulispace[\theta\stable]{Q,\tuple{d}}})
\end{equation}
given by the universal representation is fully faithful,
i.e.,
whether
\begin{equation}
  \label{equation:End-is-kQ}
  \End_{\modulispace[\theta\stable]{Q,\tuple{d}}}(\mathcal{U},\mathcal{U})
  \cong
  \field Q
\end{equation}
by the morphism naturally induced by~$\Phi_{\mathcal{U}}$.
In the thin case and with respect to the canonical stability condition
the isomorphism \eqref{equation:End-is-kQ} was established by Altmann--Hille in~\cite[Theorem~1.3]{MR1688469}.
Another known case is that of quiver flag varieties,
which are quiver moduli for acyclic quivers with a unique source~$i_0$
and dimension vector~$\tuple{d}$ such that~$d_{i_0}=1$.
In \cite[Theorem~1.2]{MR2772068} Craw shows that both parts of Schofield's conjecture hold,
i.e.,~$\mathcal{U}$ is partial tilting and can be extended to a tilting bundle,
whilst Craw--Ito--Karmazyn show in \cite[Example~2.9]{MR3803802}
that \eqref{equation:End-is-kQ} holds in this setting.

The isomorphism \eqref{equation:End-is-kQ}
for a much larger class of quiver moduli
is studied by a subset of the authors in~\cite{vector-fields},
building upon the results of this paper.
The main result in op.~cit.,
which uses methods very different from the ones used in this paper,
is that,
under the appropriate conditions,
there exists explicit isomorphisms
\begin{equation}
  \Hom_{\modulispace[\theta\stable]{Q,\tuple{d}}}(\mathcal{U}_i,\mathcal{U}_j)
  \cong
  \HH^0(\modulispace[\theta\stable]{Q,\tuple{d}},\mathcal{U}_i^\vee\otimes\mathcal{U}_j)
  \cong
  e_j\field Qe_i,
\end{equation}
for all~$i,j\in Q_0$.
This result thus naturally complements \cref{theorem:cohomology-vanishing}.

In fact, the fully faithfulness in~\eqref{equation:fully-faithful-functor}
has a counterpart for moduli of vector bundles on curves.
There exists a rich dictionary between
results for moduli spaces of quiver representations,
and for moduli spaces of vector bundles on curves,
see, e.g.,~\cite{MR3882963} for a survey on some aspects of this dictionary.
Taking this dictionary for granted,
one can take a result for one type of moduli spaces
and try to obtain the analogous result for the other type.

For moduli spaces of vector bundles
the analogue of~\eqref{equation:fully-faithful-functor}
is the Fourier--Mukai functor
\begin{equation}
  \Phi_{\mathcal{E}}\colon\derived^\bounded(C)\to\derived^\bounded(\mathrm{M}_C(r,\mathcal{L})),
\end{equation}
where~$\mathcal{E}$ is the universal vector bundle on~$C\times\mathrm{M}_C(r,\mathcal{L})$,
and~$\mathrm{M}_C(r,\mathcal{L})$ is the moduli space of
stable vector bundles of rank~$r$ and determinant~$\mathcal{L}$
such that~$\gcd(r,\deg\mathcal{L})=1$
and~$C$ is a smooth projective curve of genus~$g\geq 2$.
Its fully faithfulness is shown in various levels of generality in~\cite{MR3764066,MR3954042,MR3713871,MR4651618}.

\paragraph{Rigidity}
The next application is inspired by the same curve-quiver dictionary.
Recall that in the seminal paper~\cite{MR0384797} Narasimhan--Ramanan show
that the deformation theory of the curve
is the same as that of the moduli space~$\mathrm{M}_C(r,\mathcal{L})$,
again under the assumption that~$\gcd(r,\deg\mathcal{L})=1$ to ensure the moduli space is smooth and projective.
More precisely,
they establish
\begin{equation}
  \label{equation:narasimhan-ramanan}
  \HH^i(\mathrm{M}_C(r,\mathcal{L}),\tangent_{\mathrm{M}_C(r,\mathcal{L})})
  \cong
  \HH^i(C,\tangent_C)
\end{equation}
which is well-known to be~$3g-3$-dimensional for~$i=1$,
and vanishes for~$i\neq 1$.
For quiver representations and their moduli,
the deformation theory of the quiver
is to be interpreted as the deformation theory of the path algebra of the quiver,
and thus we are interested in its (second) Hochschild cohomology.
By~\cite[\S1.6]{MR1035222} we have that
\begin{equation}
  \label{equation:happel}
  \hochschild^2(\field Q)=0,
\end{equation}
if~$Q$ is acyclic,
thus the path algebra is \emph{rigid} as an associative algebra.
Under our dictionary between quivers and curves
the analogue of~\eqref{equation:narasimhan-ramanan}
becomes an isomorphism
\begin{equation}
  \HH^1(\modulispace[\theta\stable]{Q,\tuple{d}},\tangent_{\modulispace[\theta\stable]{Q,\tuple{d}}})
  \cong
  \hochschild^2(\field Q),
\end{equation}
and thus by the vanishing in~\eqref{equation:happel}
that~$\modulispace[\theta\stable]{Q,\tuple{d}}$
is \emph{rigid as a variety}.
This is proved in \cref{subsection:rigidity}.
The global sections of~$\tangent_{\modulispace[\theta\stable]{Q,\tuple{d}}}$
should be related to~$\hochschild^1(\field Q)$,
which is studied in~\cite{vector-fields}.

Because~\eqref{equation:happel} actually reads~$\hochschild^{\geq 2}(\field Q)$
we in fact expect higher cohomology vanishing for the tangent bundle,
We can prove the rigidity and further vanishing using \cref{theorem:cohomology-vanishing}
and the 4-term sequence~\eqref{equation:4-term-exact-sequence}.

\begin{alphacorollary}
  \label{corollary:rigidity}
  Let~$Q$ be an acyclic quiver,
  let~$\tuple{d}$ be a dimension vector,
  and let~$\theta$ be a stability parameter,
  such that~$\tuple{d}$ is~$\theta$-coprime.
  Assume furthermore that $\tuple{d}$ is strongly amply~$\theta$-stable.
  Then~$\modulispace[\theta\stable]{Q,\tuple{d}}$ satisfies
  \begin{equation}
    \HH^{\geq 1}(\modulispace[\theta\stable]{Q,\tuple{d}},\tangent_{\modulispace[\theta\stable]{Q,\tuple{d}}})=0,
  \end{equation}
  so in particular it is (infinitesimally) rigid,
  i.e.,~$\HH^1(\modulispace[\theta\stable]{Q,\tuple{d}},\tangent_{\modulispace[\theta\stable]{Q,\tuple{d}}})=0$.
\end{alphacorollary}

Thus, it is not possible to deform a quiver moduli space,
or put more colloquially: ``quiver moduli have no moduli''.
The only cases where this rigidity was known
were situations in which an explicit description of the moduli space was known
(e.g., for Kronecker or subspace quivers),
or in the case of Fano toric quiver moduli,
by applying Danilov–Steenbrink–Bott vanishing,
see, e.g.,~\cite[Theorem~2.4(i)]{MR1916637}.

In~\cite{2303.08522v2} Domokos proves a related result,
which states that when one fixes the dimension
there exist only \emph{finitely} many quiver moduli of that dimension,
where a priori the number could also be countable.
There are three variations on the finiteness statement in op.~cit.,
with different conditions on the dimension vectors that one considers.

The combination of \cref{corollary:rigidity} and Domokos' finiteness result
explains how the classification of quiver moduli in a given dimension,
especially when they are Fano,
is a particularly interesting question.

\smallskip

The computation of the global sections of the tangent bundle,
i.e., the vector fields of the moduli space,
is addressed in~\cite{vector-fields}.
It is shown that,
under favourable circumstances,
there exists an isomorphism~$\hochschild^1(\field Q)\cong\HH^0(\modulispace[\theta\stable]{Q,\tuple{d}},\tangent_{\modulispace[\theta\stable]{Q,\tuple{d}}})$,
as predicted by the dictionary between quivers and curves.

\paragraph{Height-zero moduli spaces}
The final application involves moduli spaces of sheaves on~$\mathbb{P}^2$,
and in particular those of height zero,
as introduced by Drezet~\cite{MR0916199,MR0944602,MR0915184}.
These are moduli spaces~$\moduli_{\mathbb{P}^2}(r,\mathrm{c}_1,\mathrm{c}_2)$
of (semi)stable sheaves on~$\mathbb{P}^2$
which can be characterized by having~%
$\Pic\moduli_{\mathbb{P}^2}(r,\mathrm{c}_1,\mathrm{c}_2)\cong\mathbb{Z}$~\cite[Th\'eor\`eme~2]{MR0915184}.
The precise definition is recalled in \cref{subsection:height-zero}.

Using the parameters~$(r,\mathrm{c}_1,\mathrm{c}_2)=(1,0,n)$
we describe the Hilbert scheme of~$n$ points on~$\mathbb{P}^2$
as a fine moduli space of stable sheaves.
In~\cite[Theorem~1.2]{MR3397451} it is shown that the Fourier--Mukai functor~%
$\Phi_{\mathcal{I}}\colon\derived^\bounded(\mathbb{P}^2)\to\derived^\bounded(\Hilb^n\mathbb{P}^2)$,
where~$\mathcal{I}$ is the universal ideal sheaf,
is fully faithful for all~$n\geq 2$.
In~\cite[Proposition~29]{MR3950704} it is shown that this implies that~%
$\HH^1(\Hilb^n\mathbb{P}^2,\tangent_{\Hilb^n\mathbb{P}^2})\cong\hochschild^2(\mathbb{P}^2)$ is non-zero.
As mentioned in Remark~30 of op.~cit.,
the same method to compute the infinitesimal deformations
works verbatim for other smooth projective moduli spaces,
provided the Fourier--Mukai functor is fully faithful.

This gives us examples where the functor cannot be fully faithful,
using the correspondence between moduli spaces of height zero
and certain Kronecker moduli~\cite[Th\'eor\`eme~2]{MR0916199}.
We prove the following in \cref{subsection:height-zero}.

\begin{alphacorollary}
  \label{corollary:height-zero}
  Let~$\moduli_{\mathbb{P}^2}(r,\mathrm{c}_1,\mathrm{c}_2)$ be a smooth projective
  fine moduli space of stable sheaves on~$\mathbb{P}^2$,
  with universal object~$\mathcal{E}$.
  If it is of height zero,
  then
  \begin{equation}
    \label{equation:height-zero-functor}
    \Phi_{\mathcal{E}}\colon\derived^\bounded(\mathbb{P}^2)\to\derived^\bounded(\moduli_{\mathbb{P}^2}(r,\mathrm{c}_1,\mathrm{c}_2))
  \end{equation}
  is not fully faithful.
\end{alphacorollary}

The special case of the corollary where~$(r,\mathrm{c}_1,\mathrm{c}_2)=(4,1,3)$
was established in~\cite{MR4223526}.

It is also possible to obtain the result in \cref{corollary:height-zero} using other methods,
as explained in \cref{remark:associated-exceptional-bundle},
with ingredients from the original papers by Drezet.

\paragraph{Acknowledgements}

The first author would like to thank Ben Gould, Dmitrii Pedchenko and Fabian Reede
for conversations about moduli spaces of height zero
related to \cref{remark:associated-exceptional-bundle}.
We want to thank Alastair Craw for interesting discussions regarding the case of quiver flag varieties.

P.B. was partially supported by the Luxembourg National Research Fund (FNR--17113194). \\
H.F. was partially supported by the Deutsche Forschungsgemeinschaft (DFG, German Research Foundation) SFB-TRR~358/1~2023 ``Integral Structures in Geometry and Representation Theory'' (491392403). \\
G.P. was supported by the Luxembourg National Research Fund (FNR--17953441). \\
M.R. was supported by the Deutsche Forschungsgemeinschaft (DFG, German Research Foundation) CRC-TRR~191 ``Symplectic structures in geometry, algebra and dynamics'' (281071066)

\section{Moduli spaces of quiver representations}
\label{section:moduli}

We will first set up the notation,
and recall the construction of various moduli spaces of quiver representations
with their universal objects.
Throughout we will let~$\field$ be an algebraically closed field of characteristic~0.
Initially this is not necessary, but it will be for the main results.

A \emph{quiver} $Q$ is a finite directed graph,
with set of vertices $Q_0$ and set of arrows $Q_1$,
together with two maps $\source, \target\colon Q_1 \to Q_0$,
which assign to an arrow $a \in Q_1$ its source and target, respectively.

A \emph{representation} $M$ of $Q$ over a field $\field$
is the data of a finite-dimensional vector space~$M_i$
for each vertex~$i \in Q_0$
together with a linear map~$M_a\colon M_{\source(a)} \to M_{\target(a)}$
for each arrow~$a \in Q_1$.
For a representation~$M$ of~$Q$,
we define the \emph{dimension vector} $\dimvect M \in \smash{\mathbb{N}^{Q_0}}$ as the tuple $(\dim M_i)_{i \in Q_0}$.

We will briefly recall the construction of the moduli stacks and spaces,
working over the field~$\field$.
For a more formal definition,
valid over arbitrary bases,
the interested reader can consult~\cite[\S3]{2210.00033v1}.
Given a fixed dimension vector $\tuple{d} \in \smash{\mathbb{N}_0^{Q_0}}$,
where we write~$\tuple{d}=(d_i)_{i\in Q_0}$,
we define the \emph{representation variety}
as the affine space
\begin{equation}
  \label{equation:representation-variety}
  \repspace{Q,\tuple{d}}\colonequals \prod_{a \in Q_1} \Mat_{d_{\target(a)} \times d_{\source(a)}}(\field).
\end{equation}
A point of $\repspace{Q,\tuple{d}}$ is the same as a representation of $Q$ on the collection of vector spaces $\field^{d_i}$.
Let the reductive linear algebraic group $\group=\group(\field)$ be defined by
\begin{equation}
  \group(\field)\colonequals \prod_{i \in Q_0} \GL_{d_i}(\field).
\end{equation}
This group acts on the left on $\repspace{Q,\tuple{d}}$ via change of basis;
more precisely $g = (g_i)_{i \in Q_0}\in\group$ acts on the point~$M = (M_a)_{a \in Q_1}$ by
\begin{equation}
  g\cdot M \colonequals (g_{\target(a)}M_a\,g_{\source(a)}^{-1})_{a \in Q_1}.
\end{equation}
Two elements of $\repspace{Q,\tuple{d}}$ lie in the same $\group$-orbit
if and only if the corresponding representations of $Q$ are isomorphic.
Consider the closed central subgroup $\Delta = \{(z\cdot \mathrm{I}_{d_i})_{i \in Q_0} \mid z \in \Gm\}$ of $\group$.
Its elements act trivially on $\repspace{Q,\tuple{d}}$,
whence the action on $\repspace{Q,\tuple{d}}$ descends to an action of the reductive group
\begin{equation}
  \PG_{\tuple{d}} \colonequals \group/\Delta.
\end{equation}

For later use we also recall the Euler pairing of the quiver,
which for~$\alpha, \beta \in \mathbb{Z}^{Q_0}$
is defined as
\begin{equation}
  \langle \alpha, \beta \rangle = \sum_{i \in Q_0} \alpha_i \beta_i ~- \sum_{a \in Q_1} \alpha_{\source(a)} \beta_{\target(a)}.
\end{equation}
Note that the notation does not include $Q$, as it will be clear from the context.

\paragraph{Moduli of (semi)stable representations}
Let $\theta\in\Hom(\mathbb{Z}^{Q_0},\mathbb{Z})$ be a \emph{stability parameter}.
We will identify $\Hom(\mathbb{Z}^{Q_0},\mathbb{Z})$ with its dual~$\mathbb{Z}^{Q_0}$ by the dot product.
Assume that $\theta(\tuple{d}) = 0$.
We define a notion of stability with respect to $\theta$,
following King~\cite{MR1315461} (up to a different sign convention).
\begin{definition}
  \label{definition:semistability}
  A representation $M$ of $Q$ of dimension vector $\tuple{d}$
  is called $\theta$-\emph{semistable} (resp.~$\theta$\dash\emph{stable})
  if any proper non-zero subrepresentation $N$ of $M$
  satisfies the inequality~$\theta(\dimvect N) \leq 0$ (resp.~$\theta(\dimvect N) < 0$).
\end{definition}

Let
\begin{equation}
  \repspace[\theta\semistable]{Q,\tuple{d}}
  \subseteq\repspace[\theta\stable]{Q,\tuple{d}}
  \subseteq\repspace{Q,\tuple{d}}
\end{equation}
denote the Zariski open subsets of semistable and stable representations, respectively.
Note that the semistable locus may be empty,
or the stable locus may be empty while the semistable locus is not.

We will sometimes use the canonical stability parameter~$\theta_\can$ for a dimension vector~$\tuple{d}$,
which is given by the morphism~$\langle\tuple{d},-\rangle-\langle-,\tuple{d}\rangle$.

We associate the character $\chi_\theta$ of $\group$
to the stability parameter~$\theta$,
by defining
\begin{equation}
  \chi_\theta(g) = \prod_{i \in Q_0} \det(g_i)^{-\theta_i};
\end{equation}
note the minus sign in the exponent.
As $\theta(\tuple{d}) = 0$,
the character $\chi_{\theta}$ vanishes on $\Delta$,
so it is also a character of $\PG_{\tuple{d}}$.
Let $L(\theta)$ be the trivial line bundle on $\repspace{Q,\tuple{d}}$
equipped with the $\PG_\tuple{d}$-linearization given by $\chi_\theta$.
King shows in~\cite[Theorem 4.1]{MR1315461} that $\theta$-semistability
agrees with semistability with respect to $L(\theta)$,
and $\theta$-stability is the same as proper stability in the sense of Mumford's GIT.
In op.~cit., the action by~$\group$ is used,
so the points are not properly stable in the sense of Mumford,
but the only difference is a common~$\Gm$-stabilizer.

\begin{definition}
  \label{definition:moduli}
  We define the $\theta$-semistable and $\theta$-stable \emph{moduli spaces} as
  \begin{align}
    \modulispace[\theta\semistable]{Q,\tuple{d}} & \colonequals \repspace{Q,\tuple{d}}/\!\!/_{L(\theta)} \PG_{\tuple{d}}, \\
    \modulispace[\theta\stable]{Q,\tuple{d}}     & \colonequals \repspace{Q,\tuple{d}}/_{L(\theta)} \PG_{\tuple{d}}.
  \end{align}
  Moreover, we define the $\theta$-semistable and $\theta$-stable \emph{moduli stacks} as
  \begin{align}
    \modulistack[\theta\semistable]{Q,\tuple{d}} & \colonequals [\repspace[\theta\semistable]{Q,\tuple{d}}/\group], \\
    \modulistack[\theta\stable]{Q,\tuple{d}}     & \colonequals [\repspace[\theta\stable]{Q,\tuple{d}}/\group].
  \end{align}
\end{definition}

In the case $\theta = 0$, we write $\modulispace[\semisimple]{Q,\tuple{d}}$
for the moduli space of $0$-semistable representations,
as well as $\modulistack{Q,\tuple{d}} = \modulistack[0\semistable]{Q,\tuple{d}}$
for the moduli stack.
These parameterize the semisimple representations of dimension vector~$\tuple{d}$.

The relations between the affine, semistable and stable quotients are summarised in the following diagram:
\begin{equation}
  \begin{tikzcd}[row sep=small, ampersand replacement=\&]
    \repspace[\theta\stable]{Q,\tuple{d}} \arrow[r,hook] \arrow[d]     \& \repspace[\theta\semistable]{Q,\tuple{d}} \arrow[r,hook] \arrow[d]     \& \repspace{Q,\tuple{d}} \arrow[d] \\
    \modulistack[\theta\stable]{Q,\tuple{d}} \arrow[r,hook] \arrow[d] \& \modulistack[\theta\semistable]{Q,\tuple{d}} \arrow[r,hook] \arrow[d] \& \modulistack{Q,\tuple{d}} \arrow[d] \\
    \modulispace[\theta\stable]{Q,\tuple{d}} \arrow[r, hook]          \& \modulispace[\theta\semistable]{Q,\tuple{d}} \arrow[r]                \& \modulispace[\semisimple]{Q,\tuple{d}}.
    \end{tikzcd}
\end{equation}
The arrows on the first row are open immersions.
So are the leftmost arrows in the second and third row.
The morphism $\modulistack[\theta\stable]{Q,\tuple{d}} \to \modulispace[\theta\stable]{Q,\tuple{d}}$
is a $\Gm$-gerbe,
the composition $\repspace[\theta\stable]{Q,\tuple{d}} \to \modulispace[\theta\stable]{Q,\tuple{d}}$
is a principal $\PG_\tuple{d}$-bundle in the fpqc topology,
and $\modulispace[\theta\semistable]{Q,\tuple{d}} \to \modulispace[\semisimple]{Q,\tuple{d}}$ is projective.
The semisimple moduli space $\modulispace[\semisimple]{Q,\tuple{d}}$ is affine,
and if~$Q$ is acyclic
this moduli space is isomorphic to~$\Spec\field$
by the classification of simple modules.
The moduli space $\modulispace[\theta\stable]{Q,\tuple{d}}$ is smooth.

\begin{definition}
  A dimension vector $\tuple{d} \in \mathbb{N}^{Q_0}$ is called \emph{indivisible}
  if $\gcd_{i \in Q_0}(d_i) = 1$.
  Furthermore, it is called \emph{$\theta$-coprime} if,
  for any $\tuple{d'} \in \mathbb{N}^{Q_0}$ such that $\tuple{d'} \leq \tuple{d}$,
  we have $\theta(\tuple{d'}) \neq 0$,
  unless $\tuple{d'} \in \{0,\tuple{d}\}$.
\end{definition}
We see that if $\tuple{d}$ is $\theta$-coprime, then it is indivisible.
Moreover, if $\tuple{d}$ is $\theta$-coprime,
then every $\theta$-semistable representation of dimension vector $\tuple{d}$ is $\theta$-stable.
In particular, if the quiver $Q$ is acyclic and if~$\tuple{d}$ is~$\theta$\dash coprime,
the moduli space~$\modulispace[\theta\stable]{Q,\tuple{d}}$ is a smooth projective variety.

We also recall the following definition from~\cite{MR3683503}.
\begin{definition}
  \label{definition:ample-stability}
  A dimension vector~$\tuple{d}$ is said to be \emph{amply~$\theta$-stable} if
  \begin{equation}
    \operatorname{codim}_{\Rep(Q,\tuple{d})}(\Rep(Q,\tuple{d})\setminus\Rep^{\theta\stable}(Q,\tuple{d}))\geq 2.
  \end{equation}
\end{definition}
This guarantees that the Picard rank of~$\modulispace[\theta\stable]{Q,\tuple{d}}$ will be maximal,
i.e., it is equal to~$\#Q_0-1$.
We will introduce a stronger version of this condition in \cref{definition:strong-ample-stability}.

\paragraph{Universal representations}
For every $i \in Q_0$,
let $U_i$ be the trivial vector bundle of rank~$d_i$ on $\repspace{Q,\tuple{d}}$.
We equip it with an action of $\group$.
For a vector $v \in \field^{d_i} = (U_i)_M$ in the fiber over a point $M \in \repspace{Q,\tuple{d}}$,
the group element $g=(g_i)_{i\in Q_0} \in \group$ acts by
\begin{equation}
  g \cdot v = g_iv
\end{equation}
which lies in the fiber over $g\cdot M$.
For an arrow $a \in Q_1$,
let $U_a\colon U_{\source(a)} \to U_{\target(a)}$ be
the morphism which on the fibers over a point $M \in \repspace{Q,\tuple{d}}$
sends a vector $v \in (U_{\source(a)})_M$ to $M_a(v) \in (U_{\target(a)})_M$.
This morphism is clearly~$\group$\dash equivariant.

The bundles $U_i$ do not descend to $\modulispace[\theta\stable]{Q,\tuple{d}}$
because the stabilizer $\Delta$ does not act trivially on the fibers.
We need to twist by certain line bundles to achieve this.

Let~$\tuple{a} \in \mathbb{Z}^{Q_0}$.
Define~$L(\tuple{a})$ as the trivial line bundle on~$\Rep(Q,\tuple{d})$,
so that on its fibers the element~$g \in \group$ acts by
the character~$\chi_\tuple{a}(g) = \prod_{i \in Q_0} \det(g_i)^{-a_i}$.
If we assume that~$\tuple{d}$ is indivisible,
then there exists a tuple of integers~$\tuple{a} \in \mathbb{Z}^{Q_0}$
for which~$\tuple{a}\cdot\tuple{d}=\sum_{i \in Q_0}a_i d_i = 1$.

Let $U_i(\tuple{a}) \colonequals U_i \otimes L(\tuple{a})$ on~$\Rep(Q,\tuple{d})$
for~$\tuple{a}$ such that~$\tuple{a}\cdot\tuple{d}=1$.
By the choice of $\tuple{a}$, the stabilizer $\Delta$ acts trivially,
so these vector bundles, once restricted to the $\theta$-stable locus,
descend to the quotient~$\modulispace[\theta\stable]{Q,\tuple{d}}$.
The morphisms~$U_a \otimes \operatorname{id}_{L(\tuple{a})}\colon U_{\source(a)}(\tuple{a}) \to U_{\target(a)}(\tuple{a})$
descend to morphisms~$\mathcal{U}_a(\tuple{a})\colon \mathcal{U}_{\source(a)}(\tuple{a}) \to \mathcal{U}_{\target(a)}(\tuple{a})$.

\begin{definition}
  The data $\mathcal{U}(\tuple{a}) = ((\mathcal{U}_i(\tuple{a}))_{i \in Q_0},(\mathcal{U}_a(\tuple{a}))_{a \in Q_1})$
  is a representation of $Q$ in vector bundles on $\modulispace[\theta\stable]{Q,\tuple{d}}$.
  It is called a \emph{universal representation}.
\end{definition}

\begin{remark}
  \label{remark:uniqueness-up-to-twist-universal-bundle}
  Different choices of $\tuple{a}$ give rise to non-isomorphic universal representations.
  However, for~$\tuple{a}$ and~$\tuple{b} \in \mathbb{Z}^{Q_0}$ as before,
  i.e., $\tuple{a}\cdot\tuple{d}=\tuple{b}\cdot\tuple{d}=1$,
  we have universal representations~$\mathcal{U}(\tuple{a})$ and~$\mathcal{U}(\mathbf{b})$.
  The line bundle $L(\tuple{b} - \tuple{a})$
  descends to a line bundle $\mathcal{L}(\mathbf{b} - \tuple{a})$ on $\modulispace[\theta\stable]{Q,\tuple{d}}$
  because~$(\mathbf{b}-\tuple{a})\cdot\tuple{d}=0$ making the stabilizer act trivially,
  and it gives rise to the isomorphism $\mathcal{U}(\tuple{a}) \otimes \mathcal{L}(\tuple{b} - \tuple{a}) \cong \mathcal{U}(\tuple{b})$.
\end{remark}

\section{Teleman quantization for quiver moduli}
\label{section:teleman-for-quiver-moduli}
The following section is the technical heart of the paper.
We will explain how to set up Teleman quantization for quiver moduli,
by recalling the Hesselink stratification
and relating it to the Harder--Narasimhan stratification.
This allows us to compute the width of the windows in \cref{subsection:width},
and the weights of the endomorphisms of the universal representation in \cref{subsection:weights-endomorphism-bundle}.

\subsection{The Hesselink stratification and Teleman's quantization theorem}
\label{subsection:hesselink-and-teleman}
Let $G$ be a linearly reductive algebraic group,
and let $R$ be an affine variety over $\field$ on which $G$ acts.
In our application to quiver moduli we will let~$G$ be~$\group$ or~$\PG_{\tuple{d}}$,
and~$R$ the representation variety~\eqref{equation:representation-variety}.

Let~$\lambda\colon\Gm\to G$ be a 1-parameter subgroup,
and let~$\chi\colon G \to \Gm$ be a character of $G$.
We denote by~$\langle\chi,\lambda\rangle$ the integer exponent in the identity~$\chi \circ \lambda (z) = z^{\langle\chi,\lambda\rangle}$.
Recall that the Hilbert--Mumford criterion for semistability states the following.

\begin{theorem}
  \label{theorem:hilbert-mumford}
  A point $x \in R$ is $\chi$-semistable if and only if $ \langle\chi,\lambda\rangle \geq 0$
  for every one-parameter subgroup~$\lambda\colon \Gm \to G$
  for which $\lim_{z \to 0} \lambda(z)x$ exists.
\end{theorem}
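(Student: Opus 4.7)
My plan is to reduce the statement to the standard affine Hilbert--Mumford numerical criterion (due to Mumford, Kempf, Birkes) after rephrasing the character-linearized semistability in terms of semi-invariants. By construction of the GIT quotient with the linearization~$L(\chi)$, a point~$x \in R$ is~$\chi$-semistable if and only if there exist an integer~$n \geq 1$ and a regular function~$f \in \field[R]$ satisfying~$f(g y) = \chi(g)^n f(y)$ for all~$g \in G$ and~$y \in R$, with~$f(x) \neq 0$. This reformulation is the starting point for both directions.

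For the easy direction, assume~$x$ is~$\chi$-semistable and pick such a semi-invariant~$f$ of weight~$n \geq 1$. For any 1-parameter subgroup~$\lambda\colon \Gm \to G$ for which~$x_0 \colonequals \lim_{z \to 0} \lambda(z) x$ exists, evaluation gives
\begin{equation}
  f(\lambda(z) x) = \chi(\lambda(z))^n f(x) = z^{n \langle \chi, \lambda \rangle} f(x).
\end{equation}
Since~$f$ is regular at~$x_0$, the function~$z \mapsto f(\lambda(z) x)$ extends regularly across~$z = 0$; as~$f(x) \neq 0$, this forces~$\langle \chi, \lambda \rangle \geq 0$.

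For the hard direction I would argue the contrapositive using the~$\chi$-twist trick. Consider the affine variety~$\tilde R \colonequals R \times \mathbb{A}^1$ equipped with the~$G$-action~$g \cdot (r, t) \colonequals (gr,\chi(g)^{-1} t)$; under this identification,~$\chi^n$-semi-invariants on~$R$ correspond to genuine~$G$-invariants on~$\tilde R$ homogeneous of degree~$n$ in~$t$. Assuming~$x$ is not~$\chi$-semistable, every such invariant vanishes on~$\{x\}\times\mathbb{A}^1$, so the closed~$G$-invariant subset~$R \times \{0\}$ and the orbit~$G \cdot (x, 1)$ cannot be separated by~$G$-invariants on~$\tilde R$. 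Reductivity of~$G$ and Mumford's separation theorem then force the closures of these two sets in~$\tilde R$ to intersect.

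Now apply the classical one-parameter affine Hilbert--Mumford theorem to~$\tilde R$: there exists a 1-PS~$\lambda\colon \Gm \to G$ with~$\lim_{z \to 0} \lambda(z) \cdot (x, 1) \in R \times \{0\}$. In particular~$\lim_{z \to 0} \lambda(z) x$ exists in~$R$, and the second coordinate~$z^{-\langle \chi, \lambda \rangle}$ tends to~$0$, forcing~$\langle \chi, \lambda \rangle < 0$. This produces the destabilizing 1-PS we seek, completing the contrapositive. The only nontrivial ingredient is the affine Hilbert--Mumford theorem on~$\tilde R$, which I would invoke as a black box; the one subtlety specific to our setup is the sign coming from the fact that the semi-invariant~$f$ pairs with~$t^n$ only when~$t$ transforms via~$\chi^{-1}$, and it is exactly this sign that turns the destabilizing condition into the strict inequality~$\langle \chi, \lambda \rangle < 0$.
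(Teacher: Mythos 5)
The paper does not prove this statement: it is recalled as a classical fact (the character version of the Hilbert--Mumford criterion, which in this form is King's Proposition~2.5 in~\cite{MR1315461}, resting on Kempf's affine Hilbert--Mumford theorem). Your argument is correct and is essentially King's original proof: the forward direction by evaluating a nonvanishing $\chi^n$-semi-invariant along the $\lambda$-orbit, and the converse via the twisted action on $R\times\mathbb{A}^1$ together with the affine one-parameter-subgroup theorem, with the sign bookkeeping done correctly. The only step worth tightening is the separation claim: the invariant ring of $\tilde R$ is $\bigoplus_{n\geq 0}\field[R]^{G,\chi^n}t^n$, and the degree-zero summand consists of honest $G$-invariants on $R$ that need not vanish anywhere; the point is that any invariant vanishing on $R\times\{0\}$ has no degree-zero part, so its value at $(x,1)$ is $\sum_{n\geq 1}f_n(x)=0$ when $x$ is unstable, which is exactly why the two closed invariant sets cannot be separated and hence must have intersecting closures. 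With that made explicit, the proof is complete and matches the standard reference the paper implicitly relies on.
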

Given an unstable point $x \in R$, Kempf finds in~\cite{MR0506989} a one-parameter subgroup which is
``most responsible'' for its instability.
This construction was used by Hesselink in~\cite{MR0514673}
to obtain a stratification of $R$ into locally closed subsets.
The Hesselink stratification is also the basis for Teleman's quantization theorem.
We will thus briefly recall this theory in the following.
A comprehensive reference is~\cite[\S 12 and \S 13]{MR0766741}.

Fix a maximal torus $T \subseteq G$ and let $W$ be the corresponding Weyl group.
Consider the lattice $\cocharacters(T)$ of one-parameter subgroups of $T$.
Let $(-,-)$ be a $W$-invariant inner product on the vector space $\cocharacters(T)_\mathbb{R}$
such that $(\lambda,\lambda) \in \mathbb{Z}$ for all one-parameter subgroups $\lambda$ of $T$.
The induced norm $\|-\|$ is also $W$-invariant.
Therefore, we may extend the norm to one-parameter subgroups $\lambda$ of $G$
by setting $\|\lambda\|\colonequals\|g\lambda g^{-1}\|$ where $g \in G$ is such that $g\lambda g^{-1}$ lies in $T$.

\begin{definition}
  Let $x \in R$. We define the \emph{normalized Hilbert--Mumford weight} of $x$ as
  \begin{equation}
    m(x)\colonequals \inf\left\{ \frac{\langle\chi,\lambda\rangle}{\|\lambda\|}\ \middle|\ 1 \neq \lambda \in \cocharacters(G) \text{ such that } \lim_{z\to 0} \lambda(z)x \text{ exists}\right\},
  \end{equation}
  and the set of one-parameter subgroups associated to $x$ as
  \begin{equation}
    \Lambda(x) \colonequals \left\{ \lambda \in \cocharacters(G)\ \middle|\ \lambda \text{ is primitive, } \lim_{z\to 0} \lambda(z)x \text{ exists, and } \frac{\langle\chi,\lambda\rangle}{\|\lambda\|} = m(x) \right\}.
  \end{equation}
\end{definition}

For a one-parameter subgroup $\lambda$ of $G$ we define
\begin{align}\label{definition:P_lambda}
  L_\lambda &:= \{ g \in G\mid \lambda(z)g\lambda(z)^{-1} = g \text{ for all } z \in \Gm \}, \text{ and}\\
  P_\lambda &:= \{ g \in G\mid \lim_{z \to 0} \lambda(z)g\lambda(z)^{-1} \text{ exists}\}.
\end{align}
These are closed subgroups of $G$.
Moreover, $P_\lambda$ is a parabolic subgroup with Levi factor $L_\lambda$.

We define two subsets of $R$ as follows,
where the first is the fixed locus of~$\lambda$:
\begin{align}
  R_\lambda   & \colonequals \{ x \in R \mid \lambda(z)x = x \text{ for all } z \in \Gm \}, \text{ and} \\
  R_\lambda^+ & \colonequals \{ x \in R \mid \lim_{z \to 0}\lambda(z)x \text{ exists}\}.
\end{align}
Both subsets are closed in the Zariski topology.
The group $L_\lambda$ acts on $R_\lambda$, while $P_\lambda$ acts on $R_\lambda^+$.
The morphism
\begin{equation}
  \label{equation:p-lambda}
  p_\lambda\colon R_\lambda^+ \to R_\lambda:x\mapsto\lim_{z \to 0}\lambda(z)x
\end{equation}
is $P_\lambda$-equivariant via $P_\lambda \to L_\lambda$.

\begin{definition}
  Let $\lambda$ be a one-parameter subgroup of $G$,
  and let $[\lambda]$ be its $G$-conjugacy class inside~$\cocharacters(G)$.
  We define the following subsets of $R$:
  \begin{enumerate}
    \item $S_{[\lambda]}\colonequals \{ x \in R \mid \Lambda(x) \cap [\lambda] \neq \emptyset \}$,
      which is called the \emph{Hesselink stratum} of $[\lambda]$;
    \item $\blade[\lambda]\colonequals \{ x \in R \mid \lambda \in \Lambda(x) \}\subset S_{[\lambda]}$,
      which is called the \emph{blade} of $\lambda$; and
    \item $Z_\lambda\colonequals \{ x \in R_\lambda \mid \lambda \in \Lambda(x)\} = \blade[\lambda] \cap R_\lambda\subset\blade[\lambda]$,
      which is called the \emph{limit set} of $\lambda$.
  \end{enumerate}
\end{definition}

Now we proceed to describe the Hesselink stratification.
It is an algebraic analog of the Kempf--Ness stratification.

\begin{theorem}[Hesselink]
  \label{theorem:hesselink}
  The unstable locus (or null cone) inside $R$ admits a decomposition
  \begin{equation}
    \label{equation:hesselink-stratification}
    R\setminus R^{\chi\semistable} = \bigsqcup_{[\lambda]} S_{[\lambda]},
  \end{equation}
  as a finite disjoint union into $G$-invariant locally closed subsets
  ranging over all the $G$-conjugacy classes of primitive $\lambda \in \cocharacters(G)$
  such that $\langle \chi,\lambda \rangle < 0$.
  Moreover,
  \begin{enumerate}
    \item Each limit set $Z_\lambda$ is $L_\lambda$-invariant and open inside $R_\lambda$;
    \item Each blade satisfies $\blade[\lambda] = p_\lambda^{-1}(Z_\lambda)$,
      where~$p_\lambda$ is as in~\eqref{equation:p-lambda},
      and is hence a~$P_\lambda$-invariant open subset of~$R_\lambda^+$;
    \item Each stratum $S_{[\lambda]}$ satisfies $S_{[\lambda]} = G \cdot \blade[\lambda]$; and
    \item The action map $\sigma\colon G\times \blade[\lambda] \to G\cdot \blade[\lambda] = S_{[\lambda]}$
      induces an isomorphism with the associated fiber bundle,
      i.e.,~$G\times^{P_\lambda} \blade[\lambda] \cong S_{[\lambda]}$.
  \end{enumerate}
\end{theorem}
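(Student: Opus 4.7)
The plan is to assemble the stratification in three stages: (i) produce a well-defined assignment from unstable points to $G$-conjugacy classes of primitive 1-parameter subgroups via Kempf's theorem; (ii) check the structural properties (1)--(4) describing each stratum; (iii) deduce local closedness and finiteness of the decomposition \eqref{equation:hesselink-stratification}.

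For (i), the starting point is Kempf's theorem from~\cite{MR0506989}: for every unstable $x \in R$ the infimum defining $m(x)$ is attained, so $\Lambda(x) \neq \emptyset$, and any two elements of $\Lambda(x)$ are conjugate by an element of $P_\lambda$ (the parabolic attached to either of them). Consequently, the $G$-conjugacy class of any $\lambda \in \Lambda(x)$ depends only on $x$, and taking preimages under the resulting map $x \mapsto [\lambda]$ gives the set-theoretic decomposition of $R \setminus R^{\chi\semistable}$ into the $G$-invariant pieces $S_{[\lambda]}$. The condition $\langle \chi,\lambda\rangle < 0$ is forced by \Cref{theorem:hilbert-mumford}, since $x$ being unstable exhibits at least one destabilizing 1-PS.

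For the structural properties (1)--(4) I would proceed in that order. For (1), $L_\lambda$-invariance of $Z_\lambda$ is immediate from the fact that $L_\lambda$ commutes with $\lambda$ and preserves $R_\lambda$, while openness inside $R_\lambda$ follows from upper-semicontinuity of the Kempf numerical function: failure of $\lambda$ to minimize is witnessed by some $\mu$ with strictly smaller ratio, and the existence of such $\mu$ is a closed condition. For (2), the key observation is that the weights appearing in the numerator $\langle \chi,\lambda\rangle$ and in the existence of the limit $\lim_{z\to 0}\lambda(z)x$ are computed from the $\lambda$-fixed part of $x$; hence $\lambda \in \Lambda(x)$ if and only if $\lambda \in \Lambda(p_\lambda(x))$, giving $\blade[\lambda] = p_\lambda^{-1}(Z_\lambda)$, and $P_\lambda$-invariance follows from $P_\lambda$-equivariance of $p_\lambda$. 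Property (3) is a direct translation of the definition of $S_{[\lambda]}$ combined with (i), and for (4) I would verify that the multiplication map $G \times \blade[\lambda] \to S_{[\lambda]}$ descends to an isomorphism from $G \times^{P_\lambda} \blade[\lambda]$: surjectivity is (3), and injectivity amounts to showing that if $g \cdot x_1 = x_2$ with both $x_i \in \blade[\lambda]$, then $g\lambda g^{-1}$ and $\lambda$ both lie in $\Lambda(x_2)$, whence $g \in P_\lambda$ by Kempf's $P_\lambda$-uniqueness.

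Finally, local closedness of each $S_{[\lambda]}$ follows from the fiber-bundle description in (4) together with the openness in (2) and the closedness of $R_\lambda^+$ in $R$, since associated fibre bundles preserve local closedness along the base. Finiteness of the decomposition reduces to showing that only finitely many pairs $([\lambda], m)$ with $m = \langle \chi,\lambda\rangle/\|\lambda\|$ can occur: after conjugation into the maximal torus $T$, the affine variety $R$ decomposes into finitely many $T$-weight spaces, which bounds both the norms and the ratios that can arise as values of $m(x)$.

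The main obstacle is Kempf's uniqueness statement, namely that the minimizing subgroups in $\Lambda(x)$ form a single $P_\lambda$-orbit; this underlies both the well-definedness of $x \mapsto [\lambda]$ in stage (i) and the associated-bundle isomorphism in (4). The secondary technical hurdle is the upper-semicontinuity of the Kempf numerical function $m$, which is what converts the pointwise definitions of $Z_\lambda$ and $\blade[\lambda]$ into honest open subsets and ultimately makes the strata locally closed.
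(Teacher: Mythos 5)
The paper does not prove this theorem: it is recalled as a classical result, with the proof deferred to Hesselink~\cite{MR0514673} and Kirwan~\cite[\S 12--13]{MR0766741}, and your outline is a faithful reconstruction of exactly that standard argument (Kempf's attainment and $P_\lambda$-orbit uniqueness for $\Lambda(x)$, the identification $\blade[\lambda]=p_\lambda^{-1}(Z_\lambda)$ via the limit point, self-normalization of parabolics for the associated-bundle isomorphism, and finiteness via the finitely many $T$-states). The only point to watch is the direction of semicontinuity of $m$: with the paper's convention $m(x)=\inf\{\langle\chi,\lambda\rangle/\|\lambda\|\}$ and semistability meaning $m(x)\geq 0$, it is the sets $\{m\geq c\}$ that are open, i.e.\ $m$ is lower semicontinuous, which is what the openness of $Z_\lambda$ in $R_\lambda$ actually uses.
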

The complement of the null cone can be included in the decomposition \eqref{equation:hesselink-stratification}
by using the trivial one-parameter subgroup.

\begin{remark}
  \label{remark:indexing-set}
  The disjoint union of~\eqref{equation:hesselink-stratification} ranges over $G$-conjugacy classes
  of primitive one-parameter subgroups $\lambda$ of $G$ such that $\langle \chi,\lambda \rangle < 0$.
  By a result of Kempf~\cite[Theorem 2.15]{MR3261979},
  every such $G$-conjugacy class contains a representative $\lambda' \in \cocharacters(T)$
  which is primitive and satisfies $\langle \chi,\lambda' \rangle < 0$.
  This representative is unique up to the action of $W$.
\end{remark}

Now we state the result which is most important for our purposes -- Teleman's quantization theorem.
We do not state its most general version, but only one which will be sufficient for our purposes.
This result was given in great generality by Halpern-Leistner in~\cite{MR3327537}.

\begin{theorem}[Teleman]
  \label{theorem:teleman-quantization}
  Let~$G$ be a reductive algebraic group
  acting on an affine variety~$R$.
  Let $\{S_{[\lambda]}\}$ be the Hesselink stratification~\eqref{equation:hesselink-stratification} of the unstable locus
  with respect to the character $\chi$ from \cref{theorem:hesselink}.
  Assume that all limit sets $Z_\lambda$ are connected.
  For each $\lambda$, define $\eta_\lambda$ as the weight of the action of $\lambda$
  on the determinant of the conormal bundle restricted to the limit set, i.e.,
  \begin{equation}
    \label{equation:eta-lambda}
    \eta_\lambda\colonequals\weight_\lambda \left( (\det \normal_{S_{[\lambda]}/R}^\vee) |_{Z_\lambda} \right).
  \end{equation}
  Let $\mathcal{F}$ be a $G$-linearized vector bundle on $R$.
  If, for each $\lambda$, the weights of the action of $\lambda$ on $\mathcal{F}|_{Z_\lambda}$
  are strictly less than $\eta_\lambda$,
  then the natural map
  \begin{equation}
    \HH^k(R,\mathcal{F})^G \to \HH^k(R^{\chi\semistable},\mathcal{F})^G
  \end{equation}
  is an isomorphism for all $k \geq 0$.
\end{theorem}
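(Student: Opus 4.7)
The plan is to argue one Hesselink stratum at a time, using the local cohomology long exact sequence and the exactness of the functor of $G$-invariants (valid since $G$ is linearly reductive). By ordering the finitely many strata in~\eqref{equation:hesselink-stratification} appropriately, one obtains a filtration by open subsets $R^{\chi\semistable} = U_0 \subset U_1 \subset \cdots \subset U_N = R$ with each $U_i \setminus U_{i-1}$ a single Hesselink stratum $S_{[\lambda_i]}$, closed in~$U_i$. Applying $(-)^G$ to the local cohomology long exact sequence
\begin{equation*}
  \cdots \to \HH^k_{S_{[\lambda_i]}}(U_i, \mathcal{F}) \to \HH^k(U_i, \mathcal{F}) \to \HH^k(U_{i-1}, \mathcal{F}) \to \HH^{k+1}_{S_{[\lambda_i]}}(U_i, \mathcal{F}) \to \cdots
\end{equation*}
reduces the statement to showing $\HH^k_{S_{[\lambda]}}(U_i, \mathcal{F})^G = 0$ for every stratum $S_{[\lambda]}$ and every $k \geq 0$.

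Next, this $G$-equivariant local cohomology computation reduces to a $\lambda(\Gm)$-weight analysis on the limit set $Z_\lambda$. The fiber bundle isomorphism $S_{[\lambda]} \cong G \times^{P_\lambda} \blade[\lambda]$ from \cref{theorem:hesselink}(4) identifies $G$-equivariant quasi-coherent sheaves on $S_{[\lambda]}$ with $P_\lambda$-equivariant ones on $\blade[\lambda]$, compatibly with invariants. The retraction $p_\lambda\colon \blade[\lambda] \to Z_\lambda$ of~\eqref{equation:p-lambda} is $P_\lambda$-equivariant via $P_\lambda \twoheadrightarrow L_\lambda$ and presents $\blade[\lambda]$ as an affine bundle over $Z_\lambda$ along directions of positive $\lambda$-weight. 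Pushforward along $p_\lambda$ therefore decomposes every $P_\lambda$-equivariant sheaf on $\blade[\lambda]$ as a direct sum, graded by $\lambda(\Gm)$-weight, of $L_\lambda$-equivariant sheaves on $Z_\lambda$; the unipotent radical of $P_\lambda$ acts trivially on the zero weight component, so taking $P_\lambda$-invariants amounts to isolating this component and then taking $L_\lambda$-invariants. It thus suffices to show that the zero $\lambda(\Gm)$-weight part of the local cohomology vanishes.

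The required weight estimate comes from the Koszul-type description of local cohomology along the regular closed immersion $S_{[\lambda]} \hookrightarrow R$. Concentrated in cohomological degree $c = \operatorname{codim}_R(S_{[\lambda]})$ and restricted to $Z_\lambda$, this local cohomology takes the form
\begin{equation*}
  \bigoplus_{n \geq 0} \mathcal{F}|_{Z_\lambda} \otimes \det N_{S_{[\lambda]}/R}|_{Z_\lambda} \otimes \operatorname{Sym}^n N_{S_{[\lambda]}/R}|_{Z_\lambda}.
\end{equation*}
By the defining property of the stratum, normal directions to $S_{[\lambda]}$ at a point of $Z_\lambda$ are exactly those on which $\lambda$ acts with strictly negative weight; hence $\det N_{S_{[\lambda]}/R}|_{Z_\lambda}$ has $\lambda$-weight $-\eta_\lambda$ and $\operatorname{Sym}^n N_{S_{[\lambda]}/R}|_{Z_\lambda}$ contributes non-positive weights. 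Every $\lambda$-weight in the local cohomology is therefore of the form $w - \eta_\lambda - r$ with $w$ a $\lambda$-weight of $\mathcal{F}|_{Z_\lambda}$ and $r \geq 0$, and the hypothesis $w < \eta_\lambda$ forces each such weight to be strictly negative. No zero weight survives, and the induction closes.

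The main obstacle is the second step: one must carefully verify that $G$-equivariant local cohomology along $S_{[\lambda]}$ is indeed computed, via the fiber bundle structure and the retraction $p_\lambda$, by $L_\lambda$-equivariant sheaves on $Z_\lambda$ with their $\lambda(\Gm)$-grading, and that the invariants functor behaves as anticipated under these identifications. This bookkeeping is standard but delicate; in its classical form it appears in~\cite{MR1792291}, and the most flexible modern version is in Halpern-Leistner's $\Theta$-stratification framework of~\cite{MR3327537}.
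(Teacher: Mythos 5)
The paper does not prove this statement: \cref{theorem:teleman-quantization} is imported as a black box, with the proof deferred to Teleman~\cite{MR1792291} and Halpern-Leistner~\cite{MR3327537}. So there is no internal proof to compare against; what you have written is, in outline, exactly the argument of those references (peel off one stratum at a time via the local cohomology long exact sequence, use exactness of $(-)^G$ for linearly reductive $G$, transport the computation to the blade via $S_{[\lambda]}\cong G\times^{P_\lambda}\blade[\lambda]$, push down the affine bundle $p_\lambda\colon\blade[\lambda]\to\limitset$, and kill the $\lambda$-weight-zero part using the Koszul description of $\mathcal{H}^c_{S_{[\lambda]}}(\mathcal{F})\cong\mathcal{F}|_{S_{[\lambda]}}\otimes\det\normal_{S_{[\lambda]}/R}\otimes\operatorname{Sym}^\bullet\normal_{S_{[\lambda]}/R}$). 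The sketch is sound. Two small imprecisions are worth fixing if you write this out: (i) the filtration by opens requires the closure-containment property of the Hesselink stratification (the closure of a stratum meets only that stratum and ``more unstable'' ones), which is part of \cref{theorem:hesselink} in its full form but should be invoked explicitly; (ii) the normal directions to $S_{[\lambda]}$ along $\limitset$ are not ``exactly'' the strictly negative weight directions but a quotient of them by the negative part of $\mathfrak{g}\cdot x$ --- all that matters, and all that is true, is that every weight of $\normal_{S_{[\lambda]}/R}|_{\limitset}$ is strictly negative, so that $\det$ contributes $-\eta_\lambda$ and $\operatorname{Sym}^n$ contributes weights $\leq -n$, and the hypothesis $w<\eta_\lambda$ still forces every weight of the local cohomology to be negative.
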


In particular, if~$G$ acts freely on the semistable locus,
then we obtain isomorphisms
\begin{equation}
  \HH^k(R,\mathcal{F})^G \to \HH^k(R{/\!/\!_\chi}G,\mathcal{F})
\end{equation}
for all~$k\geq 0$,
where on the right-hand side~$\mathcal{F}$ denotes the descent to the GIT quotient.

\begin{remark}
  \label{remark:weights}
  Note that as~$Z_\lambda$ is supposed to be connected,
  and $\Gm$ acts trivially via~$\lambda$ on~$Z_\lambda$,
 ~\cite[Proposition 2.10]{MR3329192} gives a split exact sequence
  \begin{equation}
    0 \to \mathbb{Z} \to \operatorname{Pic}^{\Gm}(Z_\lambda) \to \operatorname{Pic}(Z_\lambda) \to 0.
  \end{equation}
  A section is given by equipping a line bundle with the trivial linearization.
  The corresponding retraction is the map $\weight_\lambda\colon \operatorname{Pic}^{\Gm}(Z_\lambda) \to \mathbb{Z}$.
  This means that a $\Gm$-linearized line bundle $L$ on $Z_\lambda$
  is the same as a line bundle on $Z_\lambda$ together with
  a linear action of $\Gm$ on every fiber by the same weight $\weight_\lambda(L)$.
\end{remark}

\subsection{The Harder--Narasimhan and Hesselink stratifications for quiver moduli}
\label{section:HN-vs-Hesselink}
As in \cref{section:moduli}
we let~$Q$ be a quiver, $\tuple{d}$ a dimension vector
and~$\theta \in\Hom(\mathbb{Z}^{Q_0},\mathbb{Z})$ a stability parameter
such that $\theta(\tuple{d})=0$.
The character~$\chi_\theta$ of~$\group$ defined by~$\chi_\theta(g) = \prod_{i \in Q_0} \det(g_i)^{-\theta_i}$
descends to a character of~$\PG_{\tuple{d}} = \group/\Delta$,
therefore it makes no difference to consider the action of~$\group$ instead of the action of~$\PG_{\tuple{d}}$.

We consider the action of $\PG_{\tuple{d}}$ on the representation space $R = \repspace{Q,\tuple{d}}$
and describe the Hesselink stratification in this case.

For quiver representations, the fifth-named author established in~\cite[Proposition 3.4]{MR1974891}
the existence of an analogous stratification of $R$, the \emph{Harder--Narasimhan stratification}.
Under the standing assumption that~$\field$ is algebraically closed these two will coincide,
which is the content of \cref{theorem:HN=hesselink}.
In the case of complex numbers,
Hoskins gives a proof in~\cite[Theorem 3.8]{MR3871820}.
Over an arbitrary algebraically closed field,
Zamora gives a proof in~\cite[Theorem~6.3]{MR3199484}.
We recall below how the identification goes,
and explain how all the auxiliary data are related,
because we will need this for later computations.

We make the representation-theoretic setup of \cref{subsection:hesselink-and-teleman}
explicit in our special setting.
Let~$T_{\tuple{d}} \subseteq \group$ be the maximal torus of diagonal matrices.
The lattice $\cocharacters(T_{\tuple{d}})$ of one-parameter subgroups of~$T_{\tuple{d}}$
is freely generated by~$\{\lambda_{i,r} \mid i \in Q_0,\ r=1,\ldots,d_i\}$
where~$\lambda_{i,r}(z) \in T_{\tuple{d}}$
is the tuple of matrices where in the $i$th matrix there is a diagonal entry $z$
in the $r$th position.
Let~$(-,-)$ be the inner product on~$\cocharacters(T_{\tuple{d}})_\mathbb{R}$
for which~$\{\lambda_{i,r}\}$ is an orthonormal basis.
This inner product is invariant under the Weyl group~$W = \prod_{i \in Q_0}\mathrm{S}_{d_i}$.
Let $\|-\|$ be the corresponding norm.
Note that,
for each $i \in Q_0$,
the restriction of this norm on $\cocharacters(T_{d_i})$ is the standard Euclidean norm.

We also have to consider the maximal torus~$T = T_{\tuple{d}}/\Delta$ of~$\PG_{\tuple{d}}$.
Its lattice of one-parameter subgroups is~$\cocharacters(T) = \cocharacters(T_{\tuple{d}})/\mathbb{Z}\delta$,
where $\delta = \sum_{i\in Q_0}\sum_{r=1}^{d_i}\lambda_{i,r}$,
or more concretely $\delta(z) = z\cdot \operatorname{id}$.
We may identify~$\cocharacters(T)_\mathbb{R}$ with
the orthogonal complement~$(\mathbb{R}\delta)^\bot$ inside~$\cocharacters(T_{\tuple{d}})_\mathbb{R}$.
It consists of all real linear combinations~$\sum_{i\in Q_0}\sum_{r=1}^{d_i} a_{i,r}\lambda_{i,r}$
such that $\sum_{i\in Q_0}\sum_{r=1}^{d_i} a_{i,r} = 0$.
We restrict the norm $\|-\|$ to this subspace.

Now let us briefly introduce the terminology for the Harder--Narasimhan stratification.

For a dimension vector $\mathbf{e} \neq 0$ we define $|\mathbf{e}| = \sum_{i \in Q_0} e_i$
and the \emph{slope}~$\mu(\mathbf{e}) = \mu_\theta(\mathbf{e}) \colonequals \frac{\theta\cdot\mathbf{e}}{|\mathbf{e}|}$.
If~$M$ is a representation of~$Q$,
we write~$\mu(M)\colonequals\mu(\dimvect(M))$ and call it the slope of $M$.
Using the slope, we can generalize the notion of $\theta$-(semi\nobreakdash-)stability
to representations whose dimension vectors do not lie in the kernel of $\theta$.

\begin{definition}
  Let~$\mu=\mu_\theta$ be as above.
  A representation $M$ of $Q$ is called \emph{$\mu$-semistable} (respectively \emph{$\mu$-stable})
  if any nonzero proper subrepresentation $M'$ satisfies the inequality
  \begin{equation}
    \mu(M') \leq \mu(M) ~(\text{respectively } \mu(M') < \mu(M)).
  \end{equation}
\end{definition}

This generalized notion of semistability is used to define the Harder--Narasimhan stratification.
Let $M$ be a representation of $Q$.
A \emph{Harder--Narasimhan filtration} of $M$ is a sequence
\begin{equation}
  0 = N^0 \subset N^1 \subset N^2 \subset \ldots \subset N^\ell = M
\end{equation}
of subrepresentations such that each subquotient $N^m/N^{m-1}$ is $\mu$-semistable
and such that the chain of inequalities
\begin{equation}
  \mu(N^1/N^0) > \mu(N^2/N^1) > \ldots > \mu(N^\ell/N^{\ell-1})
\end{equation}
holds.
We call $\tuple{d}^* = (\dimvect(N^1/N^0),\ldots,\dimvect(N^{\ell}/N^{\ell-1}))$ the \emph{type} of the filtration,
and~$\ell$ its length.
Then Rudakov establishes in \cite[Theorem~3]{MR1480783} the existence
and uniqueness
of the Harder--Narasimhan filtration,
see also~\cite[Theorem 2.5]{MR1906875} which proves this directly
in the case of quiver representations.

We may stratify~$\repspace{Q,\tuple{d}}$ by Harder--Narasimhan type as follows.
Let~$\tuple{d}^* = (\tuple{d}^1,\ldots,\tuple{d}^\ell)$ be a Harder--Narasimhan type,
i.e.,~a sequence of dimension vectors of strictly decreasing slope
such that for each dimension vector~$\tuple{d}^m$
there exists a~$\mu$-semistable representation of dimension vector~$\tuple{d}^m$.

For a representation $M \in \repspace{Q,\tuple{d}}$
and~$a\in Q_1$,
we can decompose the matrix~$M_a\colon M_{\source(a)}\to M_{\target(a)}$ into blocks
\begin{equation}
  \label{equation:shape-matrix-HN-blocks}
  M_a =
  \left(
    \begin{matrix}
      M_a^{1,1}    & \hdots & M_a^{1,\ell} \\
      \vdots       & \ddots & \vdots \\
      M_a^{\ell,1} & \hdots & M_a^{\ell,\ell}
    \end{matrix}
  \right),
\end{equation}
where each block $M_a^{n,m}$ is of size $d^n_{\target(a)} \times d^m_{\source(a)}$,
for~$m,n=1,\ldots,\ell$.
Similarly, each component~$g_i$ of~$g \in \group$
can be decomposed into blocks~$g_i^{n,m}$ of size~$d^n_i \times d^m_i$,
for~$m,n=1,\ldots,\ell$.

Let $L_{\tuple{d}^*}$ and $P_{\tuple{d}^*}$ be subgroups of ${\group}$ defined by
\begin{align}
  L_{\tuple{d}^*} &\colonequals \{ g \in \group \mid g_i^{n,m} = 0 \text{ for all $i \in Q_0$ and all $n \neq m$}\}, \\
  P_{\tuple{d}^*} &\colonequals \{ g \in \group \mid g_i^{n,m} = 0 \text{ for all $i \in Q_0$ and all $n > m$}\}.
\end{align}
The group $L_{\tuple{d}^*}$ is a Levi factor of the parabolic subgroup $P_{\tuple{d}^*}$ of $\group$.
Let~$R_{\tuple{d}^*}$ and~$R_{\tuple{d}^*}^+$ be closed subvarieties of~$R$ defined by
\begin{align}
  R_{\tuple{d}^*}   &\colonequals \{ M \in \repspace{Q,\tuple{d}} \mid M_a^{n,m} = 0 \text{ for all $a \in Q_1$ and all $n \neq m$}\}, \\
  \label{equation:upper-triangular}
  R_{\tuple{d}^*}^+ &\colonequals \{ M \in \repspace{Q,\tuple{d}} \mid M_a^{n,m} = 0 \text{ for all $a \in Q_1$ and all $n > m$}\}.
\end{align}
The group~$L_{\tuple{d}^*} \cong \group[\tuple{d}^1] \times \ldots \times \group[\tuple{d}^\ell]$
acts on~$R_{\tuple{d}^*} \cong \repspace{Q,\tuple{d}^1} \times \ldots \times \repspace{Q,\tuple{d}^\ell}$,
and~$P_{\tuple{d}^*}$ acts on~$R_{\tuple{d}^*}^+$.
The projection~$p_{\tuple{d}^*}\colon R_{\tuple{d}^*}^+ \to R_{\tuple{d}^*}$
which forgets the off-diagonal blocks is equivariant with respect to the projection~$P_{\tuple{d}^*} \to L_{\tuple{d}^*}$.

\begin{definition}
  \label{definition:harder-narasimhan-strata}
  Let $\tuple{d}^* = (\tuple{d}^1,\ldots,\tuple{d}^\ell)$ be a Harder--Narasimhan type.
  \begin{enumerate}
    \item The locus $R_{\tuple{d}^*}^{\HN} = \{ M \in \repspace{Q,\tuple{d}} \mid \text{$M$ has a Harder--Narasimhan filtration of type $\tuple{d}^*$}\}$
      is called the associated \emph{Harder--Narasimhan stratum}.
    \item We define $Z_{\tuple{d}^*} \colonequals \repspace[\mu\semistable]{Q,\tuple{d}^1} \times \ldots \times \repspace[\mu\semistable]{Q,\tuple{d}^\ell}$.
    \item We define $\blade[\tuple{d}^*] \colonequals p_{\tuple{d}^*}^{-1}(Z_{\tuple{d}^*})$.
  \end{enumerate}
\end{definition}

We state the fifth-named author's result~\cite[Proposition 3.4]{MR1974891} on the Harder--Narasimhan stratification.

\begin{theorem}[Reineke]
  \label{theorem:HN-stratification}
  The affine space $R$ admits a decomposition
  \begin{equation}
    R = \bigsqcup_{\tuple{d}^*} R_{\tuple{d}^*}^{\HN},
  \end{equation}
  as a finite disjoint union into finitely many locally closed irreducible $\group$-invariant subsets. Moreover,
  \begin{enumerate}
    \item The set $Z_{\tuple{d}^*}$ is an $L_{\tuple{d}^*}$-invariant open subset of $R_{\tuple{d}^*}$;
    \item The set $\blade[\tuple{d}^*]$ is a $P_{\tuple{d}^*}$-invariant open subset of $R_{\tuple{d}^*}^+$;
    \item Each stratum $R_{\tuple{d}^*}^{\HN}$ satisfies $R_{\tuple{d}^*}^{\HN} = {\group}\cdot \blade[\tuple{d}^*]$; and
    \item \label{item:action-map} The action map~$\sigma\colon\group \times \blade[\tuple{d}^*] \to R_{\tuple{d}^*}^{\HN}$
      induces an isomorphism with the associated fiber bundle,
      i.e.,~$R_{\tuple{d}^*}^{\HN} \cong \group \times^{P_{\tuple{d}^*}} \blade[\tuple{d}^*]$.
  \end{enumerate}
\end{theorem}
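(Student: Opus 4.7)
The plan is to prove the four claims in sequence, using Rudakov's uniqueness of Harder--Narasimhan filtrations (already cited) as the set-theoretic starting point and then upgrading to a geometric description.

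First, I would derive the decomposition and its finiteness. By Rudakov, every $M \in R$ has a unique HN filtration, and the induced type $\tuple{d}^*$ is an invariant of $M$; this yields a set-theoretic partition $R = \bigsqcup_{\tuple{d}^*} R_{\tuple{d}^*}^{\HN}$. Only finitely many HN types arise because a type is an ordered tuple $(\tuple{d}^1,\ldots,\tuple{d}^\ell)$ of nonzero dimension vectors summing to $\tuple{d}$, with strictly decreasing slopes, and the set of such tuples is finite. The $\group$\dash invariance of each stratum follows from the fact that isomorphic representations have isomorphic HN filtrations, so the base change by $g \in \group$ preserves the HN type.

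Second, I would analyze $R_{\tuple{d}^*}$ and the blade. By the block decomposition of~\eqref{equation:shape-matrix-HN-blocks}, $R_{\tuple{d}^*}$ is canonically the product $\prod_m \repspace{Q,\tuple{d}^m}$, and $L_{\tuple{d}^*}$ acts factorwise. Since King's theorem (cf.~\cref{theorem:hilbert-mumford}) shows $\mu$\dash semistability is an open condition on each factor, $Z_{\tuple{d}^*}$ is open and $L_{\tuple{d}^*}$\dash invariant in $R_{\tuple{d}^*}$. The map $p_{\tuple{d}^*}$ is $P_{\tuple{d}^*}$\dash equivariant via the quotient $P_{\tuple{d}^*} \to L_{\tuple{d}^*}$, so $\blade[\tuple{d}^*] = p_{\tuple{d}^*}^{-1}(Z_{\tuple{d}^*})$ is a $P_{\tuple{d}^*}$\dash invariant open subset of $R_{\tuple{d}^*}^+$.

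Third, I would identify $R_{\tuple{d}^*}^{\HN}$ with $\group \cdot \blade[\tuple{d}^*]$. The content here is that $M \in \blade[\tuple{d}^*]$ carries the canonical flag of subrepresentations given by the ``first $k$ block rows'' of~\eqref{equation:upper-triangular}, and the condition $p_{\tuple{d}^*}(M) \in Z_{\tuple{d}^*}$ says that its subquotients are $\mu$\dash semistable with the correct decreasing slopes; this is then the unique HN filtration, so $M \in R_{\tuple{d}^*}^{\HN}$. Conversely, given any $M \in R_{\tuple{d}^*}^{\HN}$, choosing an ordered basis of each $M_i$ that refines the HN filtration at vertex $i$ provides $g \in \group$ bringing $M$ into~$\blade[\tuple{d}^*]$.

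Finally, for item~\ref{item:action-map} I would check that the stabilizer in $\group$ of the standard flag of subspaces at each vertex is exactly $P_{\tuple{d}^*}$, so the action map descends to a bijective morphism $\group \times^{P_{\tuple{d}^*}} \blade[\tuple{d}^*] \to R_{\tuple{d}^*}^{\HN}$. The main obstacle is upgrading this bijection to an isomorphism of varieties: one must trivialize the associated bundle locally on the flag variety $\group/P_{\tuple{d}^*}$, which I would do by covering the flag variety by the usual affine opens associated to Schubert cells and constructing a section of the action map on each open. Irreducibility of $R_{\tuple{d}^*}^{\HN}$ then follows because $\group$ is connected, $P_{\tuple{d}^*}$ is connected, and $\blade[\tuple{d}^*]$ is irreducible as a Zariski open in the affine space $R_{\tuple{d}^*}^+$.
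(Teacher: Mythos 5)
The paper does not actually prove this statement: it is quoted from Reineke~\cite[Proposition~3.4]{MR1974891}, with the comparison to the Hesselink picture outsourced to Hoskins and Zamora. So the only meaningful question is whether your blind argument is itself complete. Its set-theoretic skeleton is correct and is the standard one: uniqueness of the Harder--Narasimhan filtration gives the partition, finiteness of types is clear, $\group$\dash invariance follows from functoriality of the filtration, $Z_{\tuple{d}^*}$ is open and $L_{\tuple{d}^*}$\dash invariant, $\blade[\tuple{d}^*]=p_{\tuple{d}^*}^{-1}(Z_{\tuple{d}^*})$ is open and $P_{\tuple{d}^*}$\dash invariant, a point of $\blade[\tuple{d}^*]$ carries the standard flag as its HN filtration, and the fibres of the action map are exactly the $P_{\tuple{d}^*}$\dash orbits because any $g$ carrying one HN filtration to another must preserve the standard flag.

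The genuine gap is in the last step, and it is exactly where the real content of the theorem sits: you never establish that $R_{\tuple{d}^*}^{\HN}$ is \emph{locally closed}, and your mechanism for upgrading the bijection $\group\times^{P_{\tuple{d}^*}}\blade[\tuple{d}^*]\to R_{\tuple{d}^*}^{\HN}$ to an isomorphism does not work as described. Trivializing the associated bundle over Schubert-cell opens of $\group/P_{\tuple{d}^*}$ only gives you a good atlas for the \emph{source} of $\sigma$; the images of those trivializing opens need not be open in the stratum, and writing down a section of $\sigma$ over an open subset of $R_{\tuple{d}^*}^{\HN}$ amounts to showing that $M\mapsto(\text{its HN filtration})$ is a \emph{morphism} from the stratum to the flag variety --- which is essentially the statement being proved, so the argument is circular. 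Nor can you fall back on ``bijective morphism implies isomorphism'': even in characteristic zero this requires the target to be normal, which is not known a priori for a locally closed subset of $R$ defined by a pointwise condition. The standard repairs are either (i) to use that $\group\times^{P_{\tuple{d}^*}}R^+_{\tuple{d}^*}\to R$ is \emph{proper} (the flag variety is projective and the bundle is a closed subvariety of $\group/P_{\tuple{d}^*}\times R$), combined with a Noetherian induction over the finitely many HN types ordered by dominance to peel off closures of deeper strata and obtain local closedness and the scheme-theoretic isomorphism simultaneously; or (ii) to obtain local closedness from Kempf--Hesselink theory and then identify the strata, which is the route the paper takes via \cref{theorem:hesselink} and \cref{theorem:HN=hesselink}. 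Without one of these inputs the fourth item, and with it the local closedness claim in the preamble of the theorem, remains unproved.
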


We are going to describe the identification of the two stratifications.
Let $\tuple{d}^* = (\tuple{d}^1,\ldots,\tuple{d}^\ell)$ be a Harder--Narasimhan type
and assume that $\tuple{d}^* \neq (\tuple{d})$.
Let $C$ be the minimal positive integer such that
\begin{equation}
  \label{equation:ks}
  k_m\colonequals C\mu(\tuple{d}^m) \in \mathbb{Z}
\end{equation}
for all $m=1,\ldots,\ell$.
In \cref{table:harder-narasimhan-strata} we have listed the Harder--Nararasimhan types
for an interesting Kronecker moduli space,
together with the values of~$\mu(\tuple{d}^m)$, $C$, and~$k_m$
for all types, and all~$m=1,\ldots,\ell$.

We define a one-parameter subgroup $\lambda = \lambda_{\tuple{d}^*} = (\lambda_i)_{i \in Q_0} \in \cocharacters(T_{\tuple{d}})$ by
\begin{equation}
  \label{equation:HN-type-1-PS-correspondence}
  \lambda_i(z) = \operatorname{diag}\big( \underbrace{z^{k_1},\ldots,z^{k_1}}_{d_i^1 \text{ times}}; \underbrace{z^{k_2},\ldots,z^{k_2}}_{d_i^2 \text{ times}};\ldots; \underbrace{z^{k_\ell},\ldots,z^{k_\ell}}_{d_i^\ell \text{ times}} \big).
\end{equation}
Note that $\lambda$ is primitive by the minimality of $C$,
and that
\begin{equation}
  (\delta,\lambda)
  =\sum_{i \in Q_0} \sum_{m=1}^\ell k_md_i^m
  =\sum_{m=1}^\ell k_m|\tuple{d}^m|
  =C\sum_{m=1}^\ell \theta(\tuple{d}^m)
  =C\theta(\tuple{d})
  = 0.
\end{equation}
This shows that $\lambda$ lies in $(\mathbb{R}\delta)^\bot$
which we have identified with $\cocharacters(T)_\mathbb{R}$.
We thus can, and will, interpret~$\lambda$ also as a 1-PS of~$\PG_{\tuple{d}}$.

We can now compare the two stratifications obtained in \cref{theorem:hesselink,theorem:HN-stratification}.
This result is proven independently in~\cite[Theorem 3.8]{MR3871820} and~\cite[Theorem~6.3]{MR3199484}.
For a representation~$M$ which has a Harder--Narasimhan filtration of type $\tuple{d}^*$,
we get $[\lambda] \cap \Lambda(M) \neq \emptyset$.
This shows that every primitive one-parameter subgroup~$\lambda \in \cocharacters(T)$
which occurs in the Hesselink stratification for $\repspace{Q,\tuple{d}}$
is of the form~$\lambda_{\tuple{d}^*}$ after conjugation with a suitable Weyl group element
for a unique Harder--Narasimhan type $\tuple{d}^*$.

\begin{theorem}[Hesselink--Harder--Narasimhan correspondence]
  \label{theorem:HN=hesselink}
  For the Hesselink and Harder--Nara\-sim\-han stratifications
  from \cref{theorem:hesselink,theorem:HN-stratification}
  \begin{equation}
    R \setminus R^{\theta\semistable} = \bigsqcup_{\tuple{d}^* \neq (\tuple{d})} R_{\tuple{d}^*}^{\HN} = \bigsqcup_{[\lambda]} S_{[\lambda]}
  \end{equation}
  the following hold:
  \begin{enumerate}
    \item For every Harder--Narasimhan type $\tuple{d}^* \neq (\tuple{d})$,
      the one-parameter subgroup~$\lambda_{\tuple{d}^*} \in \cocharacters(T)$
      satisfies~$\langle \chi_\theta,\lambda_{\tuple{d}^*} \rangle < 0$
      and~$R_{\tuple{d}^*}^{\HN} = S_{[\lambda_{\tuple{d}^*}]}$.
    \item For ever ${\group}$-conjugacy class $[\lambda]$ in the Hesselink stratification for which~$S_{[\lambda]} \neq \emptyset$,
      there exists a unique Harder--Narasimhan type $\tuple{d}^* \neq (\tuple{d})$
      such that $[\lambda] = [\lambda_{\tuple{d}^*}]$.
  \end{enumerate}
\end{theorem}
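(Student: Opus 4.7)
The plan is to verify that the explicit map $\tuple{d}^* \mapsto \lambda_{\tuple{d}^*}$ defined in~\eqref{equation:HN-type-1-PS-correspondence} identifies the two stratifications together with all of their auxiliary data. The argument splits into a short direct computation establishing part~(i), a variational characterization of the HN filtration that yields the matching $R^{\HN}_{\tuple{d}^*} = S_{[\lambda_{\tuple{d}^*}]}$, and finally a bookkeeping reconciliation of auxiliary subgroups and subsets from which uniqueness in part~(ii) also follows.

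For the first part, I would unfold definitions. Using $k_m = C\mu(\tuple{d}^m)$, the identity $\theta(\tuple{d}^m) = \mu(\tuple{d}^m)|\tuple{d}^m|$, and $\theta(\tuple{d}) = 0$, one computes
\begin{equation*}
  \langle \chi_\theta, \lambda_{\tuple{d}^*}\rangle
  = -\sum_{m=1}^\ell k_m\,\theta(\tuple{d}^m)
  = -\frac{1}{C}\sum_{m=1}^\ell k_m^2\,|\tuple{d}^m|
  = -\frac{1}{C}\|\lambda_{\tuple{d}^*}\|^2,
\end{equation*}
which is strictly negative because $\lambda_{\tuple{d}^*} \neq 0$ whenever $\tuple{d}^* \neq (\tuple{d})$. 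If $M \in R^{\HN}_{\tuple{d}^*}$ and a basis is chosen compatibly with the HN filtration, then $M$ sits in $R^+_{\tuple{d}^*}$ because the weight of $\lambda_{\tuple{d}^*}$ on the block $M_a^{n,m}$ equals $k_n - k_m$, which is non-negative exactly when $n \leq m$. Hence $\lim_{z \to 0}\lambda_{\tuple{d}^*}(z)M$ exists and is the associated graded, so $\lambda_{\tuple{d}^*}$ is a valid candidate in the infimum defining $m(M)$.

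The heart of the proof is the assertion $\lambda_{\tuple{d}^*} \in \Lambda(M)$. Any primitive 1-PS $\lambda$ of $\PG_{\tuple{d}}$ for which $\lim_{z\to 0}\lambda(z)M$ exists induces a decreasing filtration of $M$ by subrepresentations $M^{\geq w}$ (the $\lambda$-weight-$\geq w$ subspaces), and a direct expansion rewrites both $\langle \chi_\theta,\lambda\rangle$ and $\|\lambda\|^2$ as weighted sums over the successive quotients' dimension vectors. The crux is to show that the ratio $-\langle \chi_\theta,\lambda\rangle/\|\lambda\|$ is maximized precisely when the filtration is the HN filtration of $M$ and the weights are proportional to the slopes. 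I would argue this in two convex-analytic steps: (i) a Cauchy--Schwarz optimization over weight assignments for a fixed filtration, which reduces the problem to the slopes alone, and (ii) a slope-polygon comparison showing that among all filtrations of $M$ by subrepresentations the HN polygon is the convex upper hull, hence maximizes the resulting sum-of-squares functional. This is the quiver incarnation of Kempf's theorem~\cite[Theorem~2.15]{MR3261979} and is carried out in~\cite{MR3871820,MR3199484}. Together with the uniqueness of the HN filtration, this yields both $R^{\HN}_{\tuple{d}^*} \subseteq S_{[\lambda_{\tuple{d}^*}]}$ and the reverse inclusion, as well as the uniqueness of $\tuple{d}^*$ claimed in part~(ii).

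Reconciling the remaining data is bookkeeping. The definitions of $L_{\tuple{d}^*}$ and $P_{\tuple{d}^*}$ coincide tautologically with $L_{\lambda_{\tuple{d}^*}}$ and $P_{\lambda_{\tuple{d}^*}}$ from~\eqref{definition:P_lambda}, and similarly $R_{\tuple{d}^*} = R_{\lambda_{\tuple{d}^*}}$ and $R^+_{\tuple{d}^*} = R^+_{\lambda_{\tuple{d}^*}}$ since the vanishing condition on off-diagonal blocks in~\eqref{equation:upper-triangular} is precisely the fixed-point, respectively limit, condition for $\lambda_{\tuple{d}^*}$. Under the identification $R_{\tuple{d}^*} \cong \prod_{m} \repspace{Q,\tuple{d}^m}$, membership in $Z_{\tuple{d}^*}$ translates into $\mu$-semistability of each factor; applying the variational step above to a block-diagonal representation shows this is exactly the condition $\lambda_{\tuple{d}^*} \in \Lambda(M)$, so $Z_{\tuple{d}^*} = Z_{\lambda_{\tuple{d}^*}}$. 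The main obstacle throughout is the convex optimization of step~(iii), which is the genuine content of Kempf's theorem in the quiver setting and carries the substantive part of the correspondence.
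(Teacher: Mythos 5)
Your proposal is correct and follows essentially the same route as the paper: the explicit computation $\langle\chi_\theta,\lambda_{\tuple{d}^*}\rangle=-\tfrac{1}{C}\|\lambda_{\tuple{d}^*}\|^2<0$ and the identification of the auxiliary data ($L$, $P$, $R^0$, $R^+$, $Z$) check out, and you correctly isolate the Kempf-type convex optimization as the genuine content. The paper itself does not reprove that step but defers it to the same references (Hoskins and Zamora) that you invoke, so your treatment is, if anything, slightly more detailed than the paper's.
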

The same correspondence also gives rise to several other identifications,
that we state and introduce notation for in the following remark.
\begin{remark}
  \label{remark:technicalities-and-abbreviations-HN-Hesselink-equivalence}
  Let $\tuple{d}^* \neq (\tuple{d})$ be a Harder--Narasimhan type
  and let $\lambda = \lambda_{\tuple{d}^*}$ be the one-parameter subgroup of $T\subset\PG_{\tuple{d}}$
  which corresponds to it under \cref{theorem:HN=hesselink}.
  Then we set
  \begin{align}
    L      &\colonequals L_{\tuple{d}^*}/\Delta = L_\lambda \\
    P      &\colonequals P_{\tuple{d}^*}/\Delta = P_\lambda \\
    R^0    &\colonequals R_{\tuple{d}^*} = R_\lambda \\
    R^+    &\colonequals R_{\tuple{d}^*}^+ = R_\lambda^+ \\
    Z      &\colonequals Z_{\tuple{d}^*} = Z_\lambda \\
    \blade &\colonequals \blade[\tuple{d}^*] = \blade[\lambda] \\
    S      &\colonequals R_{\tuple{d}^*}^{\HN} = S_{[\lambda]}.
  \end{align}
  Then we have the following situation:
  \begin{equation}
    \begin{tikzcd}
      L & P \\[-1.5em]
      \curvearrowright & \curvearrowright \\[-1.5em]
      R^0 & R^+ \arrow{l}{p} \\[-1.5em]
      \text{\rotatebox[origin=c]{90}{$\subseteq$}} & \text{\rotatebox[origin=c]{90}{$\subseteq$}} \\[-1.5em]
      Z & \Sigma \arrow{l}{} \arrow{r}{} \arrow{d}{} & S \arrow{d}{} \\
      & \{eP\} \arrow{r}{} & {\group}/P.
    \end{tikzcd}
  \end{equation}
\end{remark}

\subsection{Width of the windows for quiver moduli}
\label{subsection:width}
Let~$\tuple{d}^*=(\tuple{d}^1,\ldots,\tuple{d}^\ell)$ be a Harder--Narasimhan type,
and let $\lambda \colonequals \lambda_{\tuple{d}^*} \in \characters_*({\group})$ be
the corresponding one-parameter subgroup given by~\eqref{equation:HN-type-1-PS-correspondence}.
Let $S$ be the Hesselink stratum associated to $\lambda\colonequals\lambda_{\tuple{d}^*}$,
or equivalently let it be the Harder--Narasimhan stratum associated to $\tuple{d}^*$,
let $\blade\colonequals\blade[\lambda]$ be the blade of $\lambda$ (and of $\tuple{d}^*$),
and let~$\limitset\colonequals\limitset[\lambda]$ be the limit set of $\lambda$ (and of $\tuple{d}^*$).

We wish to compute the weight of $\det(\normal_{S/R}^\vee|_Z)$
for the action of $\lambda$,
which we denote by $\eta_\lambda$,
which is one of the ingredients in the statement of Teleman quantization as in \cref{theorem:teleman-quantization}.
By using the equivariant adjunction formula,
which we state for a general case below,
we will be able to split the computation in two parts and conduct each of them separately.
\begin{lemma}
  \label{lemma:determinant-normal-bundle-sequence}
  Let $R$ be a smooth variety,
  and let $S$ be a smooth, locally closed subvariety of $R$.
  Let a linearly reductive algebraic group $G$ act on $R$,
  and assume that $S$ is $G$-stable.
  In this case, the standard adjunction isomorphism
  \begin{equation}
    \det(\normal_{S/R})^\vee \cong \omega_R|_S \otimes \omega_S^\vee
  \end{equation}
  is $G$-equivariant, i.e., it holds in $\Pic^G(S)$.
\end{lemma}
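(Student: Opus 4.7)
The plan is to verify that the standard, non-equivariant proof of the adjunction isomorphism is built entirely out of $G$-equivariant ingredients, so that it automatically lifts to an isomorphism in $\Pic^G(S)$.

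First, I would recall the standard conormal short exact sequence of coherent sheaves on $S$:
\begin{equation}
  0 \to \normal_{S/R}^\vee \to \Omega_R|_S \to \Omega_S \to 0,
\end{equation}
which is exact because $S$ is a smooth locally closed subvariety of the smooth variety $R$. Taking determinants of a short exact sequence of locally free sheaves $0\to A\to B\to C\to 0$ yields a canonical isomorphism $\det B\cong\det A\otimes\det C$, so rearranging gives $\det(\normal_{S/R})^\vee\cong\omega_R|_S\otimes\omega_S^\vee$.

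The next step is to equip every object in this sequence with its natural $G$-linearization and check that the maps respect those linearizations. Since $G$ acts on $R$, the sheaf of Kähler differentials $\Omega_R$ carries a canonical $G$-linearization coming from the isomorphisms $g^*\Omega_R\cong\Omega_R$ induced by functoriality of $\Omega_{-/\field}$; restricting to the $G$-stable subvariety $S$ gives a $G$-linearization on $\Omega_R|_S$. The same construction on $S$, using that $G$ acts on $S$, gives a $G$-linearization on $\Omega_S$. The surjection $\Omega_R|_S\twoheadrightarrow\Omega_S$ is induced by the inclusion $S\hookrightarrow R$, which is $G$-equivariant, hence this map is $G$-equivariant. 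For the conormal sheaf, $G$-stability of $S$ means the ideal sheaf $\mathcal{I}_S\subset\mathcal{O}_R$ is a $G$-equivariant subsheaf, so $\normal_{S/R}^\vee=\mathcal{I}_S/\mathcal{I}_S^2$ inherits a canonical $G$-linearization; the inclusion $\normal_{S/R}^\vee\hookrightarrow\Omega_R|_S$ is the one induced by the universal derivation $\diff\colon\mathcal{O}_R\to\Omega_R$, which again is natural and hence $G$-equivariant. Thus the whole conormal sequence lives in the category of $G$-linearized locally free sheaves on $S$.

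Finally, I would invoke the fact that the formation of determinants is a functor from the category of short exact sequences of locally free $G$-linearized sheaves to $\Pic^G(S)$, and that the canonical isomorphism $\det B\cong\det A\otimes\det C$ is natural in the sequence. Applying this to our $G$-equivariant conormal sequence and rearranging yields the desired isomorphism $\det(\normal_{S/R})^\vee\cong\omega_R|_S\otimes\omega_S^\vee$ in $\Pic^G(S)$.

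I do not expect a real obstacle: the substance of the lemma is to record that the construction is natural in $G$. The only point that deserves care is making sure that the canonical $G$-linearizations on $\Omega_R|_S$ coming from $R$ and on $\Omega_S$ coming from $S$ are compatible under the restriction map, which follows formally from the fact that $S\hookrightarrow R$ is a $G$-equivariant morphism of smooth $G$-varieties.
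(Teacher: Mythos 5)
Your proposal is correct and matches the paper's own argument: both deduce equivariance of the adjunction isomorphism from the $G$-equivariance of the conormal exact sequence and the functoriality of determinants. The only nuance the paper makes explicit that you gloss over is that $S$ being merely locally closed means the ideal sheaf should be taken inside an open $U\subseteq R$ in which $S$ is closed; since $\Omega^1_R|_S=\Omega^1_U|_S$, this changes nothing of substance.
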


\begin{proof}
  By assumption $S$ is locally closed,
  hence closed in an open $U$ of $R$.
  Since both $U$ and $S$ are smooth,
  the standard adjunction formula applies. %
  This isomorphism is $G$-equivariant because it is induced from the short exact sequence
  \begin{equation}
    \label{equation:equivariant-adjunction}
    0 \to \mathcal{I}_{S}/\mathcal{I}^2_{S} \to \Omega^{1}_{U/\field}|_{S} \to \Omega^{1}_{S/\field} \to 0,
  \end{equation}
  which is $G$-equivariant as $S$ is $G$-stable.
\end{proof}

The space $R = \repspace{Q,\tuple{d}}$ is defined to be the product
of affine spaces~$\Mat_{d_{\target(a)} \times d_{\source(a)}}$,
indexed by the arrows~$a\in Q_1$, see \eqref{equation:representation-variety}. The coordinate ring of $R$ is thus
\begin{equation}
  \mathcal{O}(R) = \field[x_{p,q}^{(a)}]_{a \in Q_1, ~p=1,\ldots,d_{\target(a)}, ~q=1,\ldots,d_{\source(a)}},
\end{equation}
where $x_{p,q}^{(a)}$ is the regular function which selects from a tuple $M = (M_a)_{a \in Q_1}$ the $(p,q)$-th entry of the matrix $M_a$.
The sheaf of differentials $\Omega^{1}_R$ is therefore the sheaf associated to the free $\mathcal{O}(R)$-module with basis $\{\diff x_{p,q}^{(a)} \mid a \in Q_1, ~p=1,\ldots,d_{\target(a)}, ~q=1,\ldots,d_{\source(a)}\}$.

The action of $\group$ on $R\colonequals\repspace{Q,\tuple{d}}$ induces a left action on the coordinate ring,
by precomposition with the inverse, so for $f \in \mathcal{O}(R)$ and $g = (g_i)_{i \in Q_0} \in \group$,
the regular function $g\cdot f$ is defined by
\begin{equation}
  (g\cdot f)(M) = f(g^{-1}\cdot M) = f((g_{\target(a)}^{-1}M_ag_{\source(a)})_{a \in Q_1})
\end{equation}
for all $M = (M_a)_{a \in Q_1} \in R$.
The induced left action of $\group$ on the sheaf of differentials $\Omega^1_R$
is given by~$g \cdot \mathrm{d}f \colonequals \mathrm{d}(g \cdot f)$, for all $f \in \mathcal{O}(R)$.
This yields left actions on all exterior products and all $\group$-stable subsheaves of $\Omega^1_R$,
as well as on quotients of $\Omega^1_R$ by such subsheaves.

First we deal with the first tensor factor in \eqref{equation:equivariant-adjunction}
restricted to~$Z$,
i.e., we consider the limit set~$Z$ inside stratum~$S$
which itself lives inside the representation variety~$R$.
\begin{lemma}
  \label{lemma:omega-RZ-weight}
  The $\lambda$-weight of the canonical bundle~$\omega_{R}|_Z$ on~$R$ restricted is
  \begin{equation}
    \label{equation:omega-RZ-weight}
    \weight_{\lambda}(\omega_R|_Z) = \sum_{1\leq m<n\leq\ell}(k_n-k_m)\left( \langle \tuple{d}^m,\tuple{d}^n\rangle - \langle \tuple{d}^n,\tuple{d}^m\rangle \right).
  \end{equation}
\end{lemma}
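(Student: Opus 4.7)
The plan is to use that $R = \prod_{a \in Q_1} \Mat_{d_{\target(a)} \times d_{\source(a)}}$ is an affine space, so $\omega_R$ is a trivial line bundle whose $\group$-linearization is determined by the $\group$-weight of the top wedge of the coordinate differentials; since $Z \subseteq R_\lambda$ consists of $\lambda$-fixed points, the $\lambda$-weight of $\omega_R|_Z$ is constant and equals this global weight, so it suffices to compute one weight of a torus representation on a vector space.

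Concretely, I would first unwind how $\lambda$ acts on the coordinate functions. With coordinates $x_{p,q}^{(a)}$ on $R$, the action $g \cdot f(M) = f(g^{-1}M)$ gives, for $g = \lambda(z)$, the formula $\lambda(z) \cdot x_{p,q}^{(a)} = z^{k_m - k_n}\, x_{p,q}^{(a)}$ whenever the row index $p$ lies in the $n$-th block of $\target(a)$ and the column index $q$ lies in the $m$-th block of $\source(a)$, where the block sizes come from~\eqref{equation:HN-type-1-PS-correspondence}. The same formula holds for $\diff x_{p,q}^{(a)}$, so the $\lambda$-weight of $\omega_R$, being the sum over all coordinate 1-forms, is
\begin{equation}
  \weight_\lambda(\omega_R)
  = \sum_{a \in Q_1} \sum_{m,n=1}^{\ell} d_{\target(a)}^{\,n} \, d_{\source(a)}^{\,m} \, (k_m - k_n).
\end{equation}

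Next, I would apply the Euler form identity $\sum_{a\in Q_1} d_{\source(a)}^{\,m}\, d_{\target(a)}^{\,n} = \sum_{i\in Q_0} d_i^{\,m} d_i^{\,n} - \langle \tuple{d}^m, \tuple{d}^n\rangle$ inside the sum. The diagonal terms $m=n$ vanish, and after rearranging, the contribution of $\sum_i d_i^m d_i^n$ is symmetric in $m,n$ while $(k_m - k_n)$ is antisymmetric, so those terms cancel upon summing over $m \neq n$. This leaves
\begin{equation}
  \weight_\lambda(\omega_R)
  = -\sum_{m\neq n} (k_m - k_n)\, \langle \tuple{d}^m, \tuple{d}^n\rangle.
\end{equation}

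Finally, I would split the sum into $m<n$ and $m>n$, relabel indices in the second half, and combine to obtain~\eqref{equation:omega-RZ-weight}. The only substantive step is the initial weight calculation on coordinate differentials; the rest is symmetry manipulation. I do not expect a real obstacle: the identification $\weight_\lambda(\omega_R|_Z) = \weight_\lambda(\omega_R)$ is automatic since $\omega_R$ is trivial with a global $\group$-linearization and $\lambda$ acts trivially on $Z$, so restricting to $Z$ does not change the weight.
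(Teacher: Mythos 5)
Your proposal is correct and follows essentially the same route as the paper: both compute the $\lambda$-weight $k_m-k_n$ on each coordinate differential $\diff x_{p,q}^{(a)}$, sum over the block decomposition to get $\sum_{a}\sum_{m,n}(k_m-k_n)d^m_{\source(a)}d^n_{\target(a)}$, and then reduce to the Euler-form expression. The only (immaterial) difference is bookkeeping: the paper antisymmetrizes over $m<n$ first and then recognizes the difference of Euler forms, whereas you substitute the Euler-form identity first and let the symmetric part $\sum_i d_i^m d_i^n$ cancel against the antisymmetry of $k_m-k_n$.
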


\begin{proof}
  Recall that $\lambda = (\lambda_i)_{i \in Q_0}$ consists of diagonal one-parameter subgroups $\lambda_i$,
  see~\eqref{equation:HN-type-1-PS-correspondence}.
  The action of~$\lambda$ on~$x_{p,q}^{(a)}$ is given by
  \begin{equation}
    (\lambda(z) \cdot x_{p,q}^{(a)})(M) = x_{p,q}^{(a)}((\lambda_{\target(b)}(z)^{-1}M_b\lambda_{\source(b)}(z))_{b \in Q_1})
    = \lambda_{\target(a)}(z)_{p,p}^{-1}\lambda_{\source(a)}(z)_{q,q}\cdot x_{p,q}^{(a)}(M)
  \end{equation}
  for all $M = (M_b)_{b \in Q_1}$ and all $z \in \Gm$.
  This shows that
  \begin{equation}
    \lambda(z)\cdot \diff x_{p,q}^{(a)} =  \lambda_{\target(a)}(z)_{p,p}^{-1}\lambda_{\source(a)}(z)_{q,q} \diff x_{p,q}^{(a)}.
  \end{equation}
  Taking the exterior product over all the generators $\diff x_{p,q}^{(a)}$,
  we obtain
  \begin{equation}
    \bigwedge_{a \in Q_1}\bigwedge_{\substack{1 \leq p \leq d_{\target(a)} \\ 1 \leq q \leq d_{\source(a)}}} \left( \lambda(z)\cdot \diff x_{p,q}^{(a)} \right)
    = \left( \prod_{a \in Q_1} \prod_{\substack{1 \leq p \leq d_{\target(a)} \\ 1 \leq q \leq d_{\source(a)}}} \lambda_{\source(a)}(z)_{q,q} \lambda^{-1}_{\target(a)}(z)_{p,p} \right) \bigwedge_{a \in Q_1} \bigwedge_{\substack{1 \leq p \leq d_{\target(a)} \\ 1 \leq q \leq d_{\source(a)}}} \diff x_{p,q}^{(a)}.
  \end{equation}
  This determines the character of the $\Gm$-linearization via $\lambda$ of $\omega_R$. Its weight is necessarily $\weight_\lambda(\omega_R|_Z)$. So, we obtain
  \begin{equation}
    \begin{aligned}
      \weight_\lambda(\omega_R|_Z)
      &= \sum_{a \in Q_1} \sum_{1 \leq m,n \leq \ell} (k_m - k_n) d_{\source(a)}^m d_{\target(a)}^n \\
      &= \sum_{a \in Q_1} \sum_{1 \leq m < n \leq \ell}(k_m - k_n)(d_{\source(a)}^m d_{\target(a)}^n - d_{\source(a)}^n d_{\target(a)}^m) \\
      &= \sum_{1 \leq m < n \leq \ell}(k_m - k_n)(\langle \tuple{d}^n,\tuple{d}^m \rangle - \langle \tuple{d}^m,\tuple{d}^n \rangle) \\
      &= \sum_{1 \leq m < n \leq \ell}(k_n - k_m)(\langle \tuple{d}^m,\tuple{d}^n \rangle - \langle \tuple{d}^n,\tuple{d}^m \rangle).
    \end{aligned}
  \end{equation}
\end{proof}

Next we deal with the second tensor factor in \eqref{equation:equivariant-adjunction}
restricted to~$Z$,
i.e., we consider the limit set~$Z$ inside the stratum~$S$,
without reference to~$R$.
\begin{lemma}
  \label{lemma:omega-SZ-weight}
  The~$\lambda$-weight of~$\omega_S|_Z$ is
  \begin{equation}
    \label{equation:omega-SZ-weight}
    \weight_{\lambda}(\omega_S|_Z) = \sum_{1\leq m<n\leq\ell}(k_m-k_n)\langle \tuple{d}^n,\tuple{d}^m\rangle.
  \end{equation}
\end{lemma}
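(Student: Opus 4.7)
The plan is to use the fiber bundle structure $S \cong \group \times^P \blade$ from \cref{theorem:HN-stratification}\eqref{item:action-map} to split $\omega_S|_Z$ into two pieces that can be computed separately. The projection $\pi\colon S \cong \group \times^P \blade \to \group/P$ yields, after restriction to the fiber $\blade$ over $eP$, a $P$-equivariant short exact sequence
\begin{equation}
0 \to \tangent_\blade \to \tangent_S|_\blade \to (\mathfrak{g}/\mathfrak{p}) \otimes \mathcal{O}_\blade \to 0,
\end{equation}
with $\tangent_{\group/P}|_{eP} = \mathfrak{g}/\mathfrak{p}$ carrying the adjoint action. Taking duals and determinants, and restricting to $Z \subseteq \blade$, gives an isomorphism
\begin{equation}
\omega_S|_Z \cong \omega_\blade|_Z \otimes \det(\mathfrak{g}/\mathfrak{p})^\vee \cong \omega_{R^+}|_Z \otimes \det(\mathfrak{g}/\mathfrak{p})^\vee,
\end{equation}
where the second identification uses that $\blade$ is open in $R^+$. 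Hence $\weight_\lambda(\omega_S|_Z)$ is the sum of the two $\lambda$-weights on the right.

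For the first summand $\weight_\lambda(\omega_{R^+}|_Z)$ I would repeat the argument of \cref{lemma:omega-RZ-weight} verbatim, but summing only over the coordinates $x_{p,q}^{(a)}$ whose block indices $(n,m)$ satisfy $n \leq m$, since these are precisely the coordinates that survive in $R^+$ by \eqref{equation:upper-triangular}. This yields
\begin{equation}
\weight_\lambda(\omega_{R^+}|_Z) = \sum_{a \in Q_1}\sum_{1 \leq n \leq m \leq \ell}(k_m - k_n)\, d_{\target(a)}^n\, d_{\source(a)}^m.
\end{equation}
For the second summand I would use the matrix unit basis of $\mathfrak{g}/\mathfrak{p}$ indexed by positions $(p,q)$ with $p$ in block row $n$ and $q$ in block column $m$ subject to $n > m$; the adjoint action of $\lambda$ has weight $k_n - k_m$ on each such basis vector, so summing over these and dualizing gives
\begin{equation}
\weight_\lambda(\det(\mathfrak{g}/\mathfrak{p})^\vee) = \sum_{i \in Q_0}\sum_{1 \leq m < n \leq \ell}(k_m - k_n)\, d_i^n\, d_i^m.
\end{equation}

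Finally, I would add these two expressions, using that the diagonal contributions $n=m$ in the first sum vanish and performing a swap $m \leftrightarrow n$ to bring both sums into the common range $m < n$. The result is
\begin{equation}
\weight_\lambda(\omega_S|_Z) = \sum_{1 \leq m < n \leq \ell}(k_n - k_m)\Bigl[\sum_{a \in Q_1} d_{\target(a)}^m d_{\source(a)}^n - \sum_{i \in Q_0} d_i^n d_i^m\Bigr],
\end{equation}
and the bracketed combination equals $-\langle \tuple{d}^n, \tuple{d}^m\rangle$ by the definition of the Euler pairing, which after the overall sign produces the claimed formula~\eqref{equation:omega-SZ-weight}. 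The step I expect to require the most care is the first one: setting up the tangent bundle short exact sequence in a genuinely $P$-equivariant manner, and verifying that after restriction to the fiber $\blade$ the pullback $\pi^*\tangent_{\group/P}|_\blade$ is identified with the trivial bundle $(\mathfrak{g}/\mathfrak{p}) \otimes \mathcal{O}_\blade$ equipped with its adjoint $P$-action. Everything downstream is the same weight bookkeeping that was carried out in \cref{lemma:omega-RZ-weight}.
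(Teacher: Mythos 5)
Your proof is correct, and the route is a genuine (if closely related) variant of the paper's. Both arguments start from the associated-bundle description $S\cong\group\times^{P}\blade$ of \cref{theorem:HN-stratification}, but they exploit different fibrations: the paper uses the principal $P$-bundle $\sigma\colon\group\times\blade\to S$ and its relative tangent sequence, which in the fiber over $(e,M)$ becomes $0\to\mathfrak{p}\to\mathfrak{g}\oplus R^{+}\to\tangent_{S,M}\to0$ and yields the three-term formula $\weight_\lambda(\omega_S^\vee|_Z)=\weight_\lambda(\det\mathfrak{g})+\weight_\lambda(\det R^{+})-\weight_\lambda(\det\mathfrak{p})$; you instead use the projection $S\to\group/P$ with fiber $\blade$, which gives the two-term formula $\weight_\lambda(\omega_S|_Z)=\weight_\lambda(\omega_{R^{+}}|_Z)+\weight_\lambda(\det(\mathfrak{g}/\mathfrak{p})^\vee)$. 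The two bookkeepings coincide because $\det(\mathfrak{g}/\mathfrak{p})\cong\det\mathfrak{g}\otimes\det\mathfrak{p}^\vee$ equivariantly and $\weight_\lambda(\det\mathfrak{g})=0$, so your version is marginally more economical (the $\mathfrak{g}$-term never appears). The step you flag as delicate -- the $P$-equivariant identification $\pi^*\tangent_{\group/P}|_{\blade}\cong(\mathfrak{g}/\mathfrak{p})\otimes\mathcal{O}_{\blade}$ with the adjoint action -- is the standard description of $\tangent_{\group/P,eP}$ as a $P$-module and is unproblematic; since $\lambda$ lands in $L\subseteq P$, restricting to $\Gm$ via $\lambda$ and then to $Z$ gives exactly the weights you list. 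Your final reindexing and the identification of the bracket with $-\langle\tuple{d}^n,\tuple{d}^m\rangle$ check out against the paper's sign conventions for the Euler form.
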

To give a proof, we first explain how to split the computation in several steps,
eventually leading to the identity in~\eqref{equation:split-computation},
and then perform each step separately.

The morphism~$\sigma\colon \group \times \blade \to S$
in \cref{theorem:HN-stratification}(\ref{item:action-map})
is a principal fiber bundle (for the \'etale topology,
by the standing assumption on~$\field$ being of characteristic zero)
with fiber $P_{\mathbf{d}^*}$.
Therefore, $\sigma$ is smooth and the relative tangent bundle sequence
\begin{equation}
  \label{equation:relative-tangent-bundle-sequence}
  0
  \to\tangent_{\group \times \blade/S}
  \to\tangent_{\group \times \blade}
  \to\sigma^*\tangent_{S}
  \to0.
\end{equation}
is exact.
The fiber of $\sigma$ in a point $\sigma(g,M) = g\cdot M$ is the $P_{\mathbf{d}^*}$-orbit of $(g,M)$.
The latter is isomorphic to~$P_{\mathbf{d}^*}$ via the action of $P_{\mathbf{d}^*}$ on the point $(g,M)$, which is given by
\begin{equation}
  p \cdot (g,M) = (gp^{-1}, p\cdot M).
\end{equation}
The sequence \eqref{equation:relative-tangent-bundle-sequence} thus translates to
\begin{equation}
  \label{equation:relative-tangent-bundle-sequence-rewritten}
  0
  \to\mathfrak{p}_{\mathbf{d}^*} \otimes \mathcal{O}_{\group \times \blade}
  \to\tangent_{\group \times \blade}
  \to\sigma^*\tangent_{S}
  \to0,
\end{equation}
where, over a point $(g,M) \in \group \times \blade$,
the map $\mathfrak{p}_{\mathbf{d}^*} \to T_{\group \times \blade,(g,M)} = g\cdot \mathfrak{g} \oplus T_{\blade,M}$
is the derivative of the  map $P_{\mathbf{d}^*} \to \group \times \blade$
defined by $p \mapsto p\cdot (g,M) = (gp^{-1}, p\cdot M)$.

The sequence \eqref{equation:relative-tangent-bundle-sequence}
is $P_{\mathbf{d}^*}$-equivariant, which implies that~%
\eqref{equation:relative-tangent-bundle-sequence-rewritten} is also $P_{\mathbf{d}^*}$-equivariant;
note that here we consider the adjoint action of~$P_{\mathbf{d}^*}$ on~$\mathfrak{p}_{\mathbf{d}^*}$, which is induced by the conjugation action of $P_{\tuple{d}^*}$ on $\group$, the left multiplication on $\Sigma$ and the action induced by $\sigma$ on~$\tangent_{S}$.

Now we restrict the action of $P_{\mathbf{d}^*}$ to an action of $\Gm$
via the one-parameter subgroup $\lambda\colon \Gm \to P_{\mathbf{d}^*} \subseteq \group$.
As $Z$ is the locus of fixed points of the $\lambda$-action,
$\lambda$ acts on every fiber of $T_S|_Z$,
and as $Z$ is connected, the action is the same on every fiber.
Let $M \in Z$ and consider the sequence \eqref{equation:relative-tangent-bundle-sequence-rewritten}
in the fiber of the point~$(e,M) \in \group \times \blade$.
It is
\begin{equation}
  \label{equation:relative-tangent-bundle-sequence-fiber}
  0
  \to\mathfrak{p}_{\mathbf{d}^*}
  \to\mathfrak{g}_{\mathbf{d}} \oplus \tangent_{\blade,M}
  \to\tangent_{S,M}
  \to0.
\end{equation}
The point $(e,M)$ is a fixed point for the $\lambda$-action on $\group \times \blade$,
which implies that $\eqref{equation:relative-tangent-bundle-sequence-fiber}$ is a short exact sequence of representations of $\Gm$.

As the blade $\blade$ is open in the affine space $R_{\mathbf{d}^*}^+$, see \eqref{equation:upper-triangular},
the tangent space to $\blade$ at every point identifies
with $R_{\mathbf{d}^*}^+$ (considered as a vector space).
The exact sequence \eqref{equation:relative-tangent-bundle-sequence-fiber} thus becomes
\begin{equation}
  \label{equation:relative-tangent-bundle-sequence-fiber-rewritten}
  0
  \to\mathfrak{p}_{\mathbf{d}^*}
  \to\mathfrak{g}_{\mathbf{d}} \oplus R_{\mathbf{d}^*}^+
  \to\tangent_{S,M}
  \to0.
\end{equation}
Although it is not relevant for the weight computation,
let us give the map $\mathfrak{p}_{\mathbf{d}^*} \to \mathfrak{g}_{\mathbf{d}} \oplus R_{\mathbf{d}^*}^+$ explicitly.
It sends $x \in \mathfrak{p}_{\mathbf{d}^*}$ to $(-x, [x,M])$, where
\begin{equation}
  [x,M] = (x_{\target(a)}M_a - M_ax_{\source(a)})_{a \in Q_1}.
\end{equation}

We use the sequence \eqref{equation:relative-tangent-bundle-sequence-fiber-rewritten}
to compute the weight of the restriction to $Z$ of the anticanonical bundle of $S$.
It is
\begin{equation}
  \label{equation:split-computation}
  \weight_{\lambda}(\omega_S^\vee|_Z) = \weight_{\lambda}(\det(\mathfrak{g}_{\mathbf{d}})) + \weight_{\lambda}(\det(R_{\mathbf{d}^*}^+)) - \weight_{\lambda}(\det(\mathfrak{p}_{\mathbf{d}^*})).
\end{equation}

Now to the individual weights in the right-hand side of~\eqref{equation:split-computation}.
The one-parameter subgroup~$\lambda$ acts on $\mathfrak{g}_{\mathbf{d}}$ by conjugation
and $\mathfrak{p}_{\mathbf{d}^*}$ is a submodule.
On $R_{\mathbf{d}^*}^+$,
it acts by restriction of the $\group$-action via~$\lambda\colon \Gm \to \group$.
As~$\lambda$ consists of diagonal matrices,
the matrix entries of elements of $\mathfrak{g}_{\mathbf{d}}$
and of $R_{\mathbf{d}^*}^+$ are weight spaces.
Using the decomposition of the summands of $R_{\mathbf{d}^*}^+$
into blocks as in \eqref{equation:shape-matrix-HN-blocks}
and similarly for the summands of $\mathfrak{g}_{\mathbf{d}}$ and $\mathfrak{p}_{\mathbf{d}^*}$,
we obtain the following.
Here, $\field(r)$ is the one-dimensional $\Gm$-representation
whose weight is $r$.
\begin{lemma}
  \label{lemma:weights-fibers}
  Regarded as $\Gm$-representations via $\lambda$, we have the following isomorphisms:
  \begin{align}
    \mathfrak{g}_{\mathbf{d}} &\cong \bigoplus_{i \in Q_0} \bigoplus_{1 \leq m,n \leq \ell} \field(k_m-k_n)^{d_i^md_i^n} \\
    \mathfrak{p}_{\mathbf{d}^*} &\cong \bigoplus_{i \in Q_0} \bigoplus_{1 \leq m<n \leq \ell} \field(k_m-k_n)^{d_i^md_i^n} \\
    R_{\mathbf{d}^*}^+ &\cong \bigoplus_{a \in Q_1} \bigoplus_{1 \leq m < n \leq \ell} \field(k_m-k_n)^{d_{\target(a)}^md_{\source(a)}^n}
  \end{align}
\end{lemma}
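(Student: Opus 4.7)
The plan is a direct weight calculation using the explicit form of $\lambda = \lambda_{\tuple{d}^*}$ from \eqref{equation:HN-type-1-PS-correspondence}. Since each $\lambda_i(z)$ is diagonal, acting as $z^{k_m}$ on the $m$-th sub-block of $\field^{d_i}$, every matrix entry sits in a one-dimensional $\Gm$-weight space whose weight depends only on the pair of block indices to which its row and column belong. This reduces each of the three isomorphisms to a block-by-block inventory of matrix entries.

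For $\mathfrak{g}_{\tuple{d}} = \bigoplus_{i \in Q_0} \mathfrak{gl}_{d_i}$ with the adjoint action, I would decompose each $\mathfrak{gl}_{d_i}$ into $\ell \times \ell$ blocks compatible with $\tuple{d}^*$: the conjugation $A \mapsto \lambda_i(z)\, A\, \lambda_i(z)^{-1}$ scales the $(m,n)$-block by $z^{k_m - k_n}$, contributing $d_i^m d_i^n$ copies of the weight-$(k_m - k_n)$ representation. Summing over $i \in Q_0$ and over all pairs $(m,n)$ yields the first decomposition. For $\mathfrak{p}_{\tuple{d}^*} \subset \mathfrak{g}_{\tuple{d}}$, the condition $g_i^{n,m} = 0$ for $n > m$ restricts the sum to one triangular half of the blocks; the same block-by-block calculation reproduces the second decomposition, with the Levi blocks (row index equal to column index) carrying trivial weight and so being absorbed into, or omitted from, the displayed direct sum over $m < n$.

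For $R_{\tuple{d}^*}^+ \subset R$ the relevant action is not conjugation but the original $\group$-action $M_a \mapsto g_{\target(a)} M_a g_{\source(a)}^{-1}$, restricted through $\lambda$. Decomposing $M_a$ into blocks of size $d^m_{\target(a)} \times d^n_{\source(a)}$ via \eqref{equation:shape-matrix-HN-blocks}, the $(m,n)$-block scales by $z^{k_m - k_n}$; the defining vanishing condition \eqref{equation:upper-triangular} keeps exactly the blocks with row block index at most column block index, and collecting the surviving entries delivers the third decomposition.

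No step is a serious obstacle, since the entire lemma reduces to tracking a diagonal $\Gm$-action entry by entry. The only genuine risk is notational, namely keeping the row/column conventions in \eqref{equation:HN-type-1-PS-correspondence} and \eqref{equation:shape-matrix-HN-blocks}, the parabolic condition defining $P_{\tuple{d}^*}$, and the summation ranges on the right-hand sides mutually consistent. I would fix these conventions once at the outset and reuse them throughout.
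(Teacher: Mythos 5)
Your proposal is correct and follows essentially the same route as the paper, which likewise just observes that $\lambda$ is diagonal, so the individual matrix entries of $\mathfrak{g}_{\tuple{d}}$, $\mathfrak{p}_{\tuple{d}^*}$ (adjoint action) and $R_{\tuple{d}^*}^+$ (restricted $\group$-action) are weight spaces, and then reads off the weights block by block from \eqref{equation:shape-matrix-HN-blocks} and \eqref{equation:HN-type-1-PS-correspondence}. Your remark that the weight-zero diagonal (Levi) blocks are harmlessly omitted from the $m<n$ sums is the right reading of the statement.
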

The above lemma implies at once the following.
\begin{lemma}
  \label{lemma:weights-determinants}
  The $\lambda$-weights of the determinants of $\mathfrak{g}_{\mathbf{d}}$, $\mathfrak{p}_{\mathbf{d}^*}$, and $R_{\mathbf{d}^*}^+$ are
  \begin{align}
    \weight_\lambda(\det(\mathfrak{g}_{\mathbf{d}})) &= 0 \\
    \weight_\lambda(\det(\mathfrak{p}_{\mathbf{d}^*})) &= \sum_{1 \leq m < n \leq \ell} (k_m - k_n) \left(\sum_{i \in Q_0} d_i^n  d_i^m\right) \\
    \weight_\lambda(\det(R_{\mathbf{d}^*}^+)) &= \sum_{1 \leq m < n \leq \ell} (k_m - k_n) \left(\sum_{a\in Q_1} d_{\target(a)}^m d_{\source(a)}^n\right).
  \end{align}
\end{lemma}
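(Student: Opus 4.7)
The plan is to read off each weight directly from the decompositions established in \cref{lemma:weights-fibers}, using that the weight of a determinant of a direct sum of one-dimensional $\Gm$-representations is the sum of the individual weights, counted with multiplicity.

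First, for $\mathfrak{g}_{\mathbf{d}}$, \cref{lemma:weights-fibers} gives
\begin{equation}
  \weight_\lambda(\det \mathfrak{g}_{\mathbf{d}}) = \sum_{i \in Q_0} \sum_{1 \leq m,n \leq \ell} (k_m - k_n)\, d_i^m d_i^n.
\end{equation}
I would then observe that the summand indexed by $(m,n)$ is the negative of the one indexed by $(n,m)$, so the sum vanishes. (Equivalently, $\mathfrak{g}_{\mathbf{d}}$ is self-dual as a representation of $\Gm$ via $\lambda$, since the adjoint action preserves the trace form.)

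For $\mathfrak{p}_{\mathbf{d}^*}$ and $R_{\mathbf{d}^*}^+$, the decompositions in \cref{lemma:weights-fibers} already range only over pairs $m < n$, so no cancellation occurs. Summing weights with multiplicities gives directly
\begin{equation}
  \weight_\lambda(\det \mathfrak{p}_{\mathbf{d}^*}) = \sum_{1 \leq m < n \leq \ell} (k_m - k_n) \sum_{i \in Q_0} d_i^n d_i^m
\end{equation}
and
\begin{equation}
  \weight_\lambda(\det R_{\mathbf{d}^*}^+) = \sum_{1 \leq m < n \leq \ell} (k_m - k_n) \sum_{a \in Q_1} d_{\target(a)}^m d_{\source(a)}^n,
\end{equation}
which is exactly the claim. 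There is no real obstacle here; the content of the lemma has already been packaged into \cref{lemma:weights-fibers}, and this step is just bookkeeping plus the antisymmetry observation that kills the $\mathfrak{g}_{\mathbf{d}}$ sum.
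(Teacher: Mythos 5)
Your proposal is correct and matches the paper's (implicit) argument: the paper simply states that \cref{lemma:weights-fibers} ``implies at once'' the determinant weights, and your bookkeeping plus the antisymmetry observation for $\mathfrak{g}_{\mathbf{d}}$ is exactly that deduction spelled out.
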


We can now give the proof of \cref{lemma:omega-SZ-weight}.
\begin{proof}[Proof of \cref{lemma:omega-SZ-weight}]
  Using~\eqref{equation:split-computation},
  we conclude that the $\lambda$-weight of $\omega_S^\vee|_Z$ is
  \begin{equation}
    \begin{aligned}
      \weight_{\lambda}(\omega_S^\vee|_Z)
      &= \sum_{1 \leq m < n \leq \ell} (k_m - k_n) \left(\sum_{a \in Q_1} d_{\source(a)}^n d_{\target(a)}^m - \sum_{i \in Q_0} d_i^n d_i^m\right) \\
      &= \sum_{1\leq m < n \leq \ell} (k_n - k_m) \langle \tuple{d}^n,\tuple{d}^m \rangle.
    \end{aligned}
  \end{equation}
\end{proof}
Thus from \cref{lemma:determinant-normal-bundle-sequence}
and the computations in \cref{lemma:omega-RZ-weight,lemma:omega-SZ-weight}
we obtain the first ingredient to apply Teleman quantization.
\begin{corollary}
  \label{corollary:determinant-weight}
  The $\lambda$-weight of~$\det\big(\normal_{S/R}^\vee|_Z\big)$ is
  \begin{equation}
    \label{equation:weight-eta-notation}
    \eta_\lambda\colonequals\weight_{\lambda}(\det\normal_{S/R}^\vee|_Z) = \sum_{1 \leq m < n \leq \ell}(k_n - k_m)\langle \tuple{d}^m,\tuple{d}^n \rangle.
  \end{equation}
\end{corollary}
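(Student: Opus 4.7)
The plan is to stitch together the three preceding results. First, I would restrict the equivariant adjunction isomorphism of \cref{lemma:determinant-normal-bundle-sequence} to the limit set $Z \subseteq S$, obtaining a $\Gm$-equivariant identification
\[
  \det\bigl(\normal_{S/R}\bigr)^\vee\big|_Z \cong \omega_R|_Z \otimes \omega_S^\vee|_Z
\]
in $\Pic^{\Gm}(Z)$. Since $Z$ is connected and $\lambda$-weights of $\Gm$-linearized line bundles are additive under tensor products (and change sign under dualization), as recalled in \cref{remark:weights}, this reduces the corollary to the numerical identity
\[
  \eta_\lambda = \weight_\lambda(\omega_R|_Z) - \weight_\lambda(\omega_S|_Z).
\]

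Next I would substitute the explicit formulas from \cref{lemma:omega-RZ-weight,lemma:omega-SZ-weight}. After rewriting the coefficient $(k_m-k_n)$ appearing in \cref{lemma:omega-SZ-weight} as $-(k_n-k_m)$, the difference takes the form
\[
  \sum_{1\leq m<n\leq\ell}(k_n-k_m)\bigl(\langle \tuple{d}^m,\tuple{d}^n\rangle - \langle \tuple{d}^n,\tuple{d}^m\rangle\bigr) + \sum_{1\leq m<n\leq\ell}(k_n-k_m)\langle \tuple{d}^n,\tuple{d}^m\rangle,
\]
and the $\langle \tuple{d}^n,\tuple{d}^m\rangle$ contributions cancel, leaving exactly $\sum_{m<n}(k_n-k_m)\langle \tuple{d}^m,\tuple{d}^n\rangle$.

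There is essentially no genuine obstacle in this step: the substantive content has already been absorbed into \cref{lemma:determinant-normal-bundle-sequence,lemma:omega-RZ-weight,lemma:omega-SZ-weight}, and the corollary is a pure bookkeeping consequence. The only point at which one must remain attentive is the sign flip coming from dualizing $\omega_S$; this flip is precisely what cancels the asymmetric $\langle \tuple{d}^n,\tuple{d}^m\rangle$ terms produced by the canonical bundle of the ambient representation variety, yielding the clean Euler-pairing expression on the right-hand side of \eqref{equation:weight-eta-notation}.
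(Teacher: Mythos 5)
Your proposal is correct and follows exactly the route the paper takes: restrict the equivariant adjunction isomorphism of \cref{lemma:determinant-normal-bundle-sequence} to $Z$, take $\lambda$-weights, and combine \cref{lemma:omega-RZ-weight,lemma:omega-SZ-weight}, with the $\langle \tuple{d}^n,\tuple{d}^m\rangle$ terms cancelling as you describe. The arithmetic checks out and nothing is missing.
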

Throughout we will denote this weight by $\eta_\lambda$.
In \cref{table:harder-narasimhan-strata} we have computed~$\eta_\lambda$
for all Harder--Narasimhan strata,
in our running example of an interesting Kronecker moduli space.

\subsection{Weights of the endomorphism bundle of the universal bundle}
\label{subsection:weights-endomorphism-bundle}
Recall from \cref{section:moduli} that if $\tuple{d}$ is $\theta$-coprime, $\modulispace[\theta\stable]{Q,\tuple{d}}$
comes equipped with a universal bundle~$\mathcal{U}=\mathcal{U}(\tuple{a})$.
As explained in \cref{remark:uniqueness-up-to-twist-universal-bundle},
this bundle is unique up to the choice of a normalization,
which is given by a tuple~$\tuple{a}\in\mathbb{Z}^{Q_0}$ such that~$\tuple{a}\cdot\tuple{d}=1$.

The bundles~$\mathcal{U}_i(\tuple{a})$ are the descent of
(the restriction to the stable locus of)
the~$\group$-equivariant bundles~$U_i(\tuple{a})=U_i\otimes L(\tuple{a})$
on~$R=\repspace{Q,\tuple{d}}$.
We will now consider these equivariant bundles on~$R$ itself,
thus in what follows,
we do not have to put any conditions to ensure their existence,
or choose any normalizations.

For a Harder--Narasimhan type $\tuple{d}^*$,
we will compute the weights of the action of $\lambda_{\tuple{d}^*}$
on the universal bundles $U_i(\tuple{a})$ on $R$.

Recall that the element $g \in {\group}$ acts on $u_i \in U_i(\tuple{a})$ as
\begin{equation}
  g \cdot u_i \colonequals \left(\prod_{j \in Q_0} \det(g_j)^{-a_j} \right) g_i u_i.
\end{equation}
We have defined the integers~$k_m$ for~$m=1,\ldots,\ell$ in~\eqref{equation:ks}
as the smallest integer multiples of~$\mu(\tuple{d}^m)$,
and used it to define the~1-parameter subgroup $\lambda_{\tuple{d}^*}$ in~\eqref{equation:HN-type-1-PS-correspondence}.
From the block decomposition in~\eqref{equation:HN-type-1-PS-correspondence}
we obtain that~$z\in\lambda_{\tuple{d}^*}$ acts by
\begin{equation}
  \begin{aligned}
    \label{equation:1-PS-acting-on-Ui(a)}
    z \cdot u_i
    &= \left(\prod_{j\in Q_0}\det(\lambda_j(z))^{-a_j}\right)\lambda_i(z)u_i \\
    &= \left(\prod_{j\in Q_0} z^{-a_j\sum_{n=1}^\ell d^n_j k_n}\right) \lambda_i(z) u_i.
  \end{aligned}
\end{equation}

\begin{lemma}
  The weights of the action of~$\lambda$
  on $U_i(\tuple{a})$ are
  \begin{equation}
    \label{equation:weights-Ui(a)}
    \left\{ k_m - \sum_{j \in Q_0} \sum_{n=1}^\ell a_j d^n_j k_n \mid 1\leq m \leq \ell \right\},
  \end{equation}
  where the weight indexed by~$m$ appears with multiplicity~$d_i^m$.
\end{lemma}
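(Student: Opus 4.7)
The plan is to read off the weights directly from the explicit formula~\eqref{equation:1-PS-acting-on-Ui(a)} for the action of $z \in \Gm$ via $\lambda = \lambda_{\tuple{d}^*}$ on a fiber vector $u_i \in U_i(\tuple{a})$. The right-hand side factors cleanly into a scalar piece coming from the twisting line bundle $L(\tuple{a})$ and a linear piece coming from $U_i$, and the weight decomposition of $U_i(\tuple{a})$ will just be the tensor product of these two.

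First I would observe that the scalar prefactor $\prod_{j \in Q_0} z^{-a_j \sum_{n=1}^\ell d^n_j k_n}$ in~\eqref{equation:1-PS-acting-on-Ui(a)} is independent of $u_i$, and it contributes the single weight
\begin{equation}
  -\sum_{j \in Q_0}\sum_{n=1}^\ell a_j d_j^n k_n
\end{equation}
to the $\Gm$-action on every vector in every fiber of $L(\tuple{a})$.

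Next I would decompose the rank-$d_i$ trivial bundle $U_i$ into $\lambda_i$-weight spaces. By the definition~\eqref{equation:HN-type-1-PS-correspondence}, the diagonal one-parameter subgroup $\lambda_i(z)$ has $d_i^m$ entries equal to $z^{k_m}$ for each $m = 1,\ldots,\ell$, so on each fiber we get a decomposition $\field^{d_i} = \bigoplus_{m=1}^\ell V_m$ with $\dim V_m = d_i^m$, on which $\lambda_i(z)$ acts by the scalar $z^{k_m}$.

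Finally, combining these two observations, the fiber of $U_i(\tuple{a}) = U_i \otimes L(\tuple{a})$ decomposes as a $\Gm$-representation via $\lambda$ into a direct sum of $\ell$ weight spaces, the $m$-th having dimension $d_i^m$ and weight
\begin{equation}
  k_m - \sum_{j \in Q_0}\sum_{n=1}^\ell a_j d_j^n k_n,
\end{equation}
which is the claimed list~\eqref{equation:weights-Ui(a)}. There is no serious obstacle here: the lemma is essentially a repackaging of~\eqref{equation:1-PS-acting-on-Ui(a)} once one splits off the line-bundle contribution and reads the weights of the diagonal matrix $\lambda_i(z)$.
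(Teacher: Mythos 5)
Your proposal is correct and follows essentially the same route as the paper: both read the weights off directly from~\eqref{equation:1-PS-acting-on-Ui(a)}, splitting the scalar contribution of $L(\tuple{a})$ from the diagonal action of $\lambda_i(z)$ whose weight $k_m$ occurs with multiplicity $d_i^m$. Your version merely spells out the tensor-product decomposition more explicitly than the paper's one-line observation.
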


\begin{proof}
  It suffices to observe in~\eqref{equation:1-PS-acting-on-Ui(a)}
  that the weights of~$\lambda_i(z)$ acting on~$u_i$ are~$k_1,\ldots,k_\ell$ and the dimension of the weight space of weight $k_m$ is $d_i^m$.
\end{proof}

The choice of normalization disappears when we consider the summands of the endomorphism bundle,
as in the statement of the following proposition.

\begin{proposition}
  Let~$Q$ be a quiver,
  $\tuple{d}$ a dimension vector,
  and~$\theta$ a stability parameter.
  Let~$\tuple{d}^*$ be a Harder--Narasimhan type.

  The weights of the action of~$\lambda$
  on the $\group$-equivariant vector bundles~$U_i^\vee\otimes U_j=U_i^\vee\otimes L(-\tuple{a})\otimes U_j\otimes L(\tuple{a})$
  are given by
  \begin{equation}
    \{ k_m - k_n\mid 1 \leq m,n \leq \ell \},
  \end{equation}
  where~$k_m$ is defined as in~\eqref{equation:ks},
  and the weight~$k_m-k_n$ appears with multiplicity~$d_i^md_j^n$.
\end{proposition}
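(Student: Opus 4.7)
The plan is to derive the proposition essentially for free from the preceding lemma, by exploiting two standard facts: weights add under tensor product and negate under taking duals, and the line bundle twists $L(\pm \tuple{a})$ cancel out in the expression under consideration.

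First, I would observe that $L(-\tuple{a}) \otimes L(\tuple{a})$ is $\group$-equivariantly trivial, so there is an identification
\begin{equation}
  U_i^\vee \otimes U_j \cong U_i(\tuple{a})^\vee \otimes U_j(\tuple{a})
\end{equation}
of $\group$-equivariant vector bundles on $R$. This reduces the task to computing the $\lambda$-weights of the right-hand side, and already forces any dependence on the normalization $\tuple{a}$ to drop out of the final answer, which serves as a useful sanity check on the computation.

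Second, I would plug in the previous lemma. Writing $N \colonequals \sum_{j \in Q_0} \sum_{n=1}^\ell a_j d_j^n k_n$ for the common shift appearing in \eqref{equation:weights-Ui(a)}, that lemma tells us $U_i(\tuple{a})$ decomposes into $\lambda$-weight spaces whose $m$-th summand has dimension $d_i^m$ and weight $k_m - N$. Dualizing negates the weight, giving a weight space of dimension $d_i^m$ and weight $N - k_m$ for each $m$. Tensoring with the analogous decomposition of $U_j(\tuple{a})$ then produces, for each pair $(m,n)$, a weight space of dimension $d_i^m d_j^n$ with weight $(N - k_m) + (k_n - N) = k_n - k_m$. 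The normalization term $N$ cancels, consistent with the triviality of the twist noted in the first step.

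Collecting over all $1 \leq m, n \leq \ell$ yields the claimed multiset of weights. There is no real obstacle in this argument: it is a direct bookkeeping check once the previous lemma is available, and the only thing requiring any thought is being careful with the indexing convention when identifying the resulting weight $k_n - k_m$ with the form $k_m - k_n$ appearing in the statement, which is harmless as $(m,n)$ ranges over the whole square $\{1,\dots,\ell\}^2$.
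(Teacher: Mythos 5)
Your proposal is correct and is essentially the paper's own (one-line) proof made explicit: cancel the contribution of~$L(\tuple{a})$ in~\eqref{equation:weights-Ui(a)} and add the weights of~$U_i(\tuple{a})^\vee$ and~$U_j(\tuple{a})$. Your bookkeeping in fact shows that the weight carried by the $d_i^m d_j^n$\dash dimensional summand is $k_n-k_m$ rather than $k_m-k_n$, so strictly speaking the multiplicity clause of the statement should attach $d_i^n d_j^m$ to the weight $k_m-k_n$; this transposition of indices is immaterial for the displayed \emph{set} of weights and for the later application of Teleman quantization, where only the full set $\{k_m-k_n\mid 1\leq m,n\leq\ell\}$ enters.
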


\begin{proof}
  It suffices to cancel the contribution of~$L(\tuple{a})$ in~\eqref{equation:weights-Ui(a)}
  and sum them together with opposite signs.
\end{proof}

\section{Main theorem and applications}
\label{section:results}

\subsection{Cohomology vanishing and Schofield's conjecture}
\label{subsection:cohomology-vanishing}
For any Harder--Narasimhan type $\tuple{d}^* = (\tuple{d}^1,\dots,\tuple{d}^\ell)$,
let $\lambda \colonequals \lambda_{\tuple{d}^*}$ be the corresponding 1-PS given in~\eqref{equation:HN-type-1-PS-correspondence};
let~$k_m \colonequals C\mu(\tuple{d}^m)=C\theta(\tuple{d}^m)/\lvert\tuple{d}^m\rvert$ be the smallest integer multiple of the slopes
as in~\eqref{equation:ks},
and let~$\eta_\lambda$ be the $\lambda$-weight of~$\det\big(\normal_{S/R}^\vee|_Z\big)$
as in \cref{corollary:determinant-weight}.

To prove the following proposition, we introduce the following stronger version of \cref{definition:ample-stability}.

\begin{definition}
  \label{definition:strong-ample-stability}
  The dimension vector $\tuple{d}$ is said to be \emph{strongly amply~$\theta$-stable}
  if for any dimension subvector $\tuple{e}$ for which $\mu(\tuple{e}) > \mu(\tuple{d} - \tuple{e})$,
  the inequality $\langle \tuple{e}, \tuple{d} - \tuple{e} \rangle \leq -2$ holds.
\end{definition}
The condition above has been shown~\cite[Proposition 5.1]{MR3683503} to be sufficient to imply ample stability of $\moduli$.
As we will see in \cref{example:strict-ample-stability}, the converse is not true.

\begin{proposition}
  \label{proposition:inequality}
  Let~$Q$ be a quiver,
  $\tuple{d}$ a dimension vector,
  and~$\theta$ a stability parameter,
  such that~$\tuple{d}$ is strongly amply~$\theta$-stable.
  Let~$\lambda=\lambda_{\tuple{d}^*}$ be the 1-PS subgroup
  corresponding to a Harder--Narasimhan type~$\tuple{d}^*$.
  For all $1 \leq m,n\leq\ell$, the inequality
  \begin{equation}
    \label{equation:teleman-inequality}
    k_m-k_n<\eta_\lambda
  \end{equation}
  holds.
\end{proposition}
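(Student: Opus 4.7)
The plan is to reduce the problem to bounding the maximum value of $k_m - k_n$, which is $k_1 - k_\ell$ since the slopes $\mu(\tuple{d}^m) = k_m/C$ are strictly decreasing by definition of a Harder--Narasimhan type. Since the $k_m$ are strictly decreasing integers, $k_m - k_n \leq k_1 - k_\ell$ for all $m, n$, so it suffices to prove $k_1 - k_\ell < \eta_\lambda$ (assuming $\ell \geq 2$, the case $\ell = 1$ being trivial).

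First I would rewrite $\eta_\lambda$ from \cref{corollary:determinant-weight} as
\begin{equation}
\eta_\lambda = \sum_{1 \leq m < n \leq \ell}(k_m - k_n)\bigl(-\langle \tuple{d}^m, \tuple{d}^n\rangle\bigr),
\end{equation}
and then exploit the telescoping identity $k_m - k_n = \sum_{j=m}^{n-1}(k_j - k_{j+1})$ to swap the order of summation:
\begin{equation}
\eta_\lambda = \sum_{j=1}^{\ell-1}(k_j - k_{j+1})\biggl(-\sum_{m \leq j < n}\langle \tuple{d}^m, \tuple{d}^n\rangle\biggr).
\end{equation}
Next, for each index $j \in \{1, \ldots, \ell-1\}$, I would set $\tuple{e}_j \colonequals \tuple{d}^1 + \ldots + \tuple{d}^j$, so that $\tuple{d} - \tuple{e}_j = \tuple{d}^{j+1} + \ldots + \tuple{d}^\ell$, and observe by bilinearity of the Euler pairing that
\begin{equation}
\sum_{m \leq j < n}\langle \tuple{d}^m, \tuple{d}^n\rangle = \langle \tuple{e}_j, \tuple{d} - \tuple{e}_j\rangle.
\end{equation}

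The key input now is the strong ample stability hypothesis. Since $\mu(\tuple{e}_j)$ is a weighted average of the slopes $\mu(\tuple{d}^1), \ldots, \mu(\tuple{d}^j)$ and $\mu(\tuple{d} - \tuple{e}_j)$ is a weighted average of $\mu(\tuple{d}^{j+1}), \ldots, \mu(\tuple{d}^\ell)$, and the slopes are strictly decreasing, we have $\mu(\tuple{e}_j) > \mu(\tuple{d} - \tuple{e}_j)$. Applying \cref{definition:strong-ample-stability} to the subvector $\tuple{e}_j$ yields $-\langle \tuple{e}_j, \tuple{d} - \tuple{e}_j\rangle \geq 2$ for every $j$. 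Substituting back and using that each $k_j - k_{j+1} \geq 1$ is a positive integer (by primitivity of $\lambda_{\tuple{d}^*}$ coming from the minimal choice of $C$ in~\eqref{equation:ks}), we obtain
\begin{equation}
\eta_\lambda \geq 2\sum_{j=1}^{\ell-1}(k_j - k_{j+1}) = 2(k_1 - k_\ell).
\end{equation}
Since $\ell \geq 2$ forces $k_1 - k_\ell \geq 1 > 0$, this gives $\eta_\lambda \geq 2(k_1 - k_\ell) > k_1 - k_\ell \geq k_m - k_n$, which is the required strict inequality.

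There is no serious obstacle here: the strong ample stability condition is essentially designed to produce the telescoped bound, and the main step is spotting that the double sum over pairs $(m, n)$ reorganizes into a sum of pairings $\langle \tuple{e}_j, \tuple{d} - \tuple{e}_j\rangle$ indexed by the "cuts" in the filtration. The factor of $2$ in \cref{definition:strong-ample-stability} is exactly what provides the strict inequality; the weaker ample stability condition (codimension~$\geq 2$, which gives $\langle \tuple{e}, \tuple{d} - \tuple{e}\rangle \leq -1$) would only yield $\eta_\lambda \geq k_1 - k_\ell$, which is not strict.
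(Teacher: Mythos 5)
Your proposal is correct and follows essentially the same route as the paper: telescoping the double sum in $\eta_\lambda$ over the ``cuts'' $r$, identifying each inner sum with $-\langle \tuple{e}_r, \tuple{d}-\tuple{e}_r\rangle$ for $\tuple{e}_r = \tuple{d}^1+\cdots+\tuple{d}^r$, and invoking strong ample stability (valid since the strictly decreasing slopes force $\mu(\tuple{e}_r) > \mu(\tuple{d}-\tuple{e}_r)$) to get each term $\geq 2$. If anything, your final step is spelled out slightly more carefully than the paper's, which stops at ``$N_r \geq 2$, this completes the proof.''
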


\begin{proof}
  We can rewrite $\eta_\lambda$ as
  \begin{equation}
    \begin{aligned}
      \eta_\lambda
      &= \sum_{1\leq m < n\leq\ell} (k_n - k_m)\langle \tuple{d}^m,\tuple{d}^n \rangle \\
      &= \sum_{1\leq m < n\leq\ell} \sum_{r = m}^{n-1} (k_{r+1} - k_r) \langle \tuple{d}^m,\tuple{d}^n \rangle \\
      &= \sum_{r = 1}^\ell (k_{r+1} - k_r)\sum_{\substack{m \leq r \\ n > r}} \langle \tuple{d}^m, \tuple{d}^n \rangle \\
      &= \sum_{r = 1}^\ell (k_r - k_{r+1})\underbrace{\sum_{\substack{m \leq r \\ n > r}} \bigl( - \langle \tuple{d}^m, \tuple{d}^n \rangle\bigr)}_{=: N_r}.
    \end{aligned}
  \end{equation}
  To prove the inequality~\eqref{equation:teleman-inequality},
  it will be enough to show that~$N_r \geq 1$ for all~$r$,
  and that there exists at least one $r$ for which $N_r \geq 2$.
  To apply the condition from \cref{definition:strong-ample-stability},
  we notice that each term $N_r$ satisfies the equality
  \begin{equation}
    \begin{aligned}
      -N_r &= \left\langle \sum_{m \leq r} \tuple{d}^m, \sum_{n > r} \tuple{d}^n \right\rangle
      &= \langle \tuple{e}, \tuple{d} - \tuple{e} \rangle.
    \end{aligned}
  \end{equation}
  Since $\mu(\tuple{e}) > \mu(\tuple{d} - \tuple{e})$,
  we can conclude that $N_r \geq 2$.
  This completes the proof.
\end{proof}

Two remarks are now in order.

\begin{remark}
  \label{remark:hope}
  We hope that the strong ample stability in \cref{proposition:inequality}
  is not needed,
  and that ample stability in fact suffices,
  but we have not been able to prove this.
\end{remark}

\begin{remark}
  Without ample stability,~\eqref{equation:teleman-inequality} does not hold.
  Let~$\tuple{d}^*=(\tuple{d}^1,\ldots,\tuple{d}^\ell)$ be a Harder--Narasimhan type which has codimension~1.
  Its codimension can be computed as
  \begin{equation}
    \label{equation:codimension-one-sum}
    1=\sum_{1\leq m<n\leq\ell}-\langle\tuple{d}^m,\tuple{d}^n\rangle
  \end{equation}
  and because~$m<n$ in this sum, we have
  \begin{equation}
    \langle\tuple{d}^m,\tuple{d}^n\rangle
    =\operatorname{hom}(\tuple{d}^m,\tuple{d}^n)-\operatorname{ext}(\tuple{d}^m,\tuple{d}^n)
    =-\operatorname{ext}(\tuple{d}^m,\tuple{d}^n).
  \end{equation}
  Thus in the sum~\eqref{equation:codimension-one-sum}
  all but one term vanish,
  with one term having value exactly~1.
  But then
  \begin{equation}
    k_\ell-k_1\geq\sum_{1\leq m<n\leq\ell}(k_m-k_n)(-\langle\tuple{d}^m,\tuple{d}^n\rangle)=\eta_\lambda,
  \end{equation}
  where the last equality is \cref{corollary:determinant-weight},
  thus~\eqref{equation:teleman-inequality} does not hold.
\end{remark}

\Cref{proposition:inequality} allows us to prove the main technical result of the paper.

\begin{proof}[Proof of \cref{theorem:cohomology-vanishing}]
  In order to apply Teleman quantization
  as stated in \cref{theorem:teleman-quantization}
  we need to check for every~1-PS subgroup~$\lambda$
  arising in the Hesselink stratification that
  the limit set~$\limitset$ is connected,
  the weights of the equivariant bundle are bounded above by~$\eta_\lambda$ as defined in~\eqref{equation:eta-lambda}.
  The connectedness follows from the definition of~$\limitset[{\tuple{d}^*}]$
  in \cref{definition:harder-narasimhan-strata}
  and the correspondence in \cref{theorem:HN=hesselink}.
  The inequality is checked in \cref{proposition:inequality}.

  Thus we can apply Teleman quantization,
  and we obtain isomorphisms
  \begin{equation}
    \HH^k(\Rep(Q,\tuple{d}),U_i^\vee\otimes U_j)^{\group}
    \cong
    \HH^k(\Rep^{\theta\semistable}(Q,\tuple{d}),U_i^\vee\otimes U_j)^{\group}
    \cong
    \HH^k(\modulispace[\theta\stable]{Q,\tuple{d}},\mathcal{U}_i^\vee\otimes\mathcal{U}_j)
  \end{equation}
  for all~$k\geq 0$,
  as stability and semistability agree.
  But because the representation variety~$\Rep(Q,\tuple{d})$ is \emph{affine},
  the cohomology~$\HH^k(\Rep(Q,\tuple{d}),U_i^\vee\otimes U_j)$ vanishes for~$k\geq 1$,
  even before taking invariants,
  which proves~\eqref{equation:cohomology-vanishing}.
\end{proof}

This proves Schofield's conjecture as follows.
\begin{proof}[Proof of \cref{corollary:schofield}]
  This follows from the isomorphism
  \begin{equation}
    \Ext_{\modulispace[\theta\stable]{Q,\tuple{d}}}^{\geq 1}(\mathcal{U},\mathcal{U})
    \cong\bigoplus_{i,j\in Q_0}\HH^{\geq 1}(\modulispace[\theta\stable]{Q,\tuple{d}},\mathcal{U}_i^\vee\otimes\mathcal{U}_j),
  \end{equation}
  and the vanishing in \cref{theorem:cohomology-vanishing}.
\end{proof}

\paragraph{Windows and weights for a 6-dimensional Kronecker moduli space}
To illustrate the theory we will give the details of an interesting and relevant example.
We will consider the smallest Kronecker moduli space which is not a projective space or a Grassmannian,
and it was studied explicitly in \cite{2307.01711v2}.
\begin{example}
  \label{example:kronecker-moduli-space}
  Let~$Q$ be the 3-Kronecker quiver,
  and consider dimension vector~$\tuple{d}=(2,3)$.
  The stability parameter is necessarily the canonical stability parameter~$\theta=\theta_\can=(3,-2)$.
\end{example}
We have~$\Rep(Q,\tuple{d})\cong\Mat_{3\times 2}(\field)^3\cong\mathbb{A}^{18}$,
with a group action of~$\GL_2\times\GL_3$.
There are~8 Harder--Narasimhan strata,
the (semi)stable one and~7 unstable strata.
In \cref{table:harder-narasimhan-strata} we have enumerated the properties of the unstable strata.
Everything can be determined from the Euler pairings~$(\langle\tuple{d}^m,\tuple{d}^n\rangle)_{m,n=1,\ldots,\ell}$
and the tuple~$(\mu(\tuple{d}^m))_{m=1,\ldots,\ell}$.
The Harder--Narasimhan type~$((2,0),(0,3))$ is the origin in~$\mathbb{A}^{18}$,
corresponding to the direct sum~$\mathrm{S}_1^{\oplus2}\oplus\mathrm{S}_2^{\oplus3}$ of simple representations.

In the inequality \eqref{equation:teleman-inequality}
it suffices to consider the case~$m=1$ and~$n=\ell$,
because the entries of the tuple~$(k_m)_{m=1,\ldots,\ell}$ are strictly decreasing by definition,
thus the largest difference is realized by~$k_1-k_\ell$.
Therefore, in \cref{table:harder-narasimhan-strata} only this difference is listed.
We can immediately see that \eqref{equation:teleman-inequality} holds.

\begin{table}
  \centering
  \begin{tabular}{lcccccc}
    \toprule
    \multicolumn{1}{c}{$\tuple{d}^*$} & $\operatorname{codim}$ & $(\mu(\tuple{d}^m))_{m=1,\ldots,\ell}$ & $C$ & $(k_m)_{m=1,\ldots,\ell}$ & $k_1-k_\ell$ & $\eta_\lambda$ \\
    \midrule
    $((1,1),(1,2))$                   & 3                      & $(1/2,-1/3)$                           & 6   & $(3,-2)$                  & 5            & 15 \\
    $((2,2),(0,1))$                   & 4                      & $(1/2,-2)$                             & 2   & $(1,-4)$                  & 5            & 20 \\
    $((2,1),(0,2))$                   & 10                     & $(4/3,-2)$                             & 3   & $(4,-6)$                  & 10           & 100 \\
    $((1,0),(1,3))$                   & 8                      & $(3,-3/4)$                             & 4   & $(12,-3)$                 & 15           & 120 \\
    $((1,0),(1,2),(0,1))$             & 9                      & $(3,-1/3,-2)$                          & 3   & $(9,-1,-6)$               & 15           & 100 \\
    $((1,0),(1,1),(0,2))$             & 12                     & $(3,1/2,-2)$                           & 2   & $(6,1,-4)$                & 10           & 90 \\
    $((2,0),(0,3))$                   & 18                     & $(3,-2)$                               & 1   & $(3,-2)$                  & 5            & 90 \\
    \bottomrule
  \end{tabular}
  \caption{Harder--Narasimhan strata and their properties for 3-Kronecker quiver and $\tuple{d}=(2,3)$.}
  \label{table:harder-narasimhan-strata}
\end{table}

\subsection{Rigidity}
\label{subsection:rigidity}
From now on we will assume that~$Q$ is acyclic.
In \cref{section:moduli} we recalled the construction of the universal bundle~$\mathcal{U}=\bigoplus_{i\in Q_0}\mathcal{U}_i$
on~$\modulispace[\theta\stable]{Q,\tuple{d}}$.
Using the summands~$\mathcal{U}_i$ we obtain an exact sequence of vector bundles,
by combining~\cite[Proposition~3.3 and Proposition~3.7]{2307.01711v2}:
\begin{equation}
  \label{equation:4-term-exact-sequence}
  0
  \to\mathcal{O}_{\modulispace[\theta\stable]{Q,\tuple{d}}}
  \to\bigoplus_{i \in Q_0} \mathcal{U}^\vee_i \otimes \mathcal{U}_i
  \to\bigoplus_{a \in Q_1} \mathcal{U}^\vee_{\source(a)} \otimes \mathcal{U}_{\target(a)}
  \to\tangent_{\modulispace[\theta\stable]{Q,\tuple{d}}}
  \to0.
\end{equation}
See also~\cite[Section 4.1]{MR4352662} for a more direct (but less detailed) construction.

To compute the higher cohomology of the tangent bundle for the proof of \cref{corollary:rigidity},
we can split the sequence in two short exact sequences.
The cohomology of the middle terms in~\eqref{equation:4-term-exact-sequence}
is the subject of \cref{theorem:cohomology-vanishing}.
For the first term we have the following.

\begin{proposition}
  Let~$Q$, $\tuple{d}$ and~$\theta$ be as in \cref{corollary:rigidity},
  where it is possible to omit the condition that~$\tuple{d}$ is strongly amply~$\theta$-stable.
  Then
  \begin{equation}
    \label{equation:higher-cohomology-structure-sheaf-vanishing}
    \HH^k(\modulispace[\theta\stable]{Q,\tuple{d}},\mathcal{O}_{\modulispace[\theta\stable]{Q,\tuple{d}}}) = 0
  \end{equation}
  for all $k \geq 1$.
\end{proposition}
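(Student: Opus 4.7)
The plan is to apply Teleman quantization (\cref{theorem:teleman-quantization}) directly to the trivially $\group$-equivariant structure sheaf $\mathcal{O}_R$ on $R = \repspace{Q,\tuple{d}}$. With the trivial linearization, the $\lambda$-weight of $\mathcal{O}_R|_{\limitset[\lambda]}$ is $0$ for every one-parameter subgroup appearing in the Hesselink stratification, so the Teleman inequality collapses to the single requirement $0 < \eta_\lambda$ for every non-trivial Harder--Narasimhan type $\tuple{d}^* \neq (\tuple{d})$.

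The key claim is that, for an acyclic quiver, this positivity holds unconditionally, without invoking any form of ample stability. By \cref{corollary:determinant-weight},
\begin{equation*}
  \eta_\lambda = \sum_{1 \leq m < n \leq \ell}(k_n - k_m)\langle \tuple{d}^m, \tuple{d}^n\rangle.
\end{equation*}
For indices $m < n$ in an HN type one has $\mu(\tuple{d}^m) > \mu(\tuple{d}^n)$, so $k_m > k_n$, giving $k_n - k_m \leq -1$. Since $Q$ is acyclic, $\Hom(M^m, M^n) = 0$ whenever $M^m, M^n$ are $\mu$-semistable of dimension vectors $\tuple{d}^m, \tuple{d}^n$ with strictly decreasing slope, and $\Ext^{\geq 2}$ over $\field Q$ vanishes; hence $\langle \tuple{d}^m, \tuple{d}^n\rangle = -\dim \Ext^1(M^m, M^n) \leq 0$. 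Every summand of $\eta_\lambda$ is therefore non-negative. Moreover, the codimension of the HN stratum equals $\sum_{m < n} -\langle \tuple{d}^m, \tuple{d}^n\rangle$ and is at least $1$ for a non-trivial type, so at least one integer Euler form is $\leq -1$ and contributes at least $1$ to $\eta_\lambda$, yielding $\eta_\lambda \geq 1$.

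Applying \cref{theorem:teleman-quantization} and exploiting $\theta$-coprimality, which forces $\modulispace[\theta\stable]{Q,\tuple{d}} = \modulispace[\theta\semistable]{Q,\tuple{d}}$ and makes $\PG_{\tuple{d}}$ act freely on the semistable locus, one obtains isomorphisms
\begin{equation*}
  \HH^k(R, \mathcal{O}_R)^{\group} \cong \HH^k(\modulispace[\theta\stable]{Q,\tuple{d}}, \mathcal{O}_{\modulispace[\theta\stable]{Q,\tuple{d}}})
\end{equation*}
for all $k \geq 0$. Since $R$ is affine, the left-hand side vanishes for $k \geq 1$ (already before taking invariants), which yields~\eqref{equation:higher-cohomology-structure-sheaf-vanishing}. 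The only non-routine point is the lower bound $\eta_\lambda \geq 1$, which relies on the vanishing of $\Hom$ between semistables of strictly decreasing slope together with the acyclicity of $Q$; both are standard, so no serious obstacle is expected.
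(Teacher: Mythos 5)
Your proof is correct, but it is genuinely different from the one in the paper. The paper disposes of this proposition in three lines: $\theta$-coprimality makes $\tuple{d}$ an indivisible Schur root, so $\modulispace[\theta\stable]{Q,\tuple{d}}$ is a \emph{rational} smooth projective variety by Schofield's rationality theorem, and $\HH^{\geq 1}(-,\mathcal{O})$ vanishes by birational invariance of these groups. You instead run Teleman quantization on the trivially linearized $\mathcal{O}_R$, which reduces the claim to $\eta_\lambda>0$ for every nontrivial Harder--Narasimhan type; your verification of this is sound: each summand $(k_n-k_m)\langle\tuple{d}^m,\tuple{d}^n\rangle$ of the expression in \cref{corollary:determinant-weight} is non-negative because $k_m>k_n$ for $m<n$ and $\langle\tuple{d}^m,\tuple{d}^n\rangle=-\operatorname{ext}(\tuple{d}^m,\tuple{d}^n)\leq 0$ (vanishing of $\Hom$ between semistables of strictly decreasing slope plus heredity of $\field Q$ --- note that neither of these actually uses acyclicity), and at least one summand is $\geq 1$ since the codimension $\sum_{m<n}-\langle\tuple{d}^m,\tuple{d}^n\rangle$ of a nontrivial stratum is at least $1$. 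What each approach buys: the paper's argument is shorter but imports an external rationality theorem; yours stays entirely inside the quantization framework already set up in \cref{section:teleman-for-quiver-moduli}, and it makes explicit that for the structure sheaf the Teleman inequality holds with \emph{no} ample-stability hypothesis whatsoever --- a cleaner explanation of why the proposition can drop that assumption, and a data point in favour of the hope expressed in \cref{remark:hope}.
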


\begin{proof}
  Because~$\tuple{d}$ is chosen to be $\theta$-coprime,
  $\tuple{d}$ is a Schur root for the quiver~$Q$.
  As~$\gcd(\tuple{d}) = 1$
  we have that~$\modulispace[\theta\stable]{Q,\tuple{d}}$ is a rational variety~\cite[Theorem 6.4]{MR1914089}.
  It is also smooth and projective, as discussed in \cref{section:moduli}.
  We thus obtain the vanishing in~\eqref{equation:higher-cohomology-structure-sheaf-vanishing}
  by the birational invariance of these cohomology groups.
\end{proof}

We thus arrive at the following (short) proof of the rigidity of quiver moduli.

\begin{proof}[Proof of \cref{corollary:rigidity}]
  The higher cohomology of the first term in~\eqref{equation:4-term-exact-sequence} vanishes.
  By \cref{theorem:cohomology-vanishing} the higher cohomology of the second and third term in~\eqref{equation:4-term-exact-sequence} vanishes,
  if we in addition assume \cref{definition:strong-ample-stability}.
  This allows us to conclude.
\end{proof}

The next example shows that strong ample stability
is a strictly stronger notion than ample stability, thus justifying its name.

\begin{example}
  \label{example:strict-ample-stability}
  Let $Q$ be the 3-vertex quiver
  \begin{equation}
    \label{equation:3-vertex-quiver-1}
    Q\colon
    \begin{tikzpicture}[baseline = -20pt, node distance = 1.5cm]
      \node (1)                {};
      \node (2) [right of = 1] {};
      \node (3) [below of = 2] {};
      \draw (1) circle (2pt) node[above] {1};
      \draw (2) circle (2pt) node[above] {2};
      \draw (3) circle (2pt) node[below] {3};
      \draw[->]                  (1) edge (3);
      \draw[->]                  (2) edge (3);
      \draw[->, bend left = 30]  (1) edge (2);
      \draw[->, bend left = 15]  (1) edge (2);
      \draw[->]                  (1) edge (2);
      \draw[->, bend right = 15] (1) edge (2);
      \draw[->, bend right = 30] (1) edge (2);
    \end{tikzpicture}
  \end{equation}
  Let~$\tuple{d} = (4,1,4)$,
  and let $\theta = \theta_\can = (9,-16,-5)$ be the canonical stability parameter.
  The dimension vector $\tuple{d}$ is $\theta$-coprime,
  and the stable locus $\repspace[\theta\stable]{Q,\tuple{d}}$ is non-empty.
  The representation variety admits a Harder--Narasimhan stratification with 41~strata:
  40~unstable strata, and the dense stratum of~$\theta$\dash stable representations.

  Via the formula of~\cite[Proposition 3.4]{MR1974891},
  we can verify that the codimension of the unstable locus is~$2$, therefore ample stability holds.
  On the other hand, strong ample stability is not satisfied:
  the dimension vector $\tuple{e} = (3,1,2)$ gives the inequality~$\frac{1}{6} = \mu(\tuple{e}) > \mu(\tuple{d-e}) = \frac{-1}{3}$,
  but we have $\langle \tuple{e}, \tuple{d} - \tuple{e}\rangle = -1$.
  The inequality in~\eqref{equation:teleman-inequality} can be shown to hold directly in this case,
  thus the resulting moduli space is rigid, even in the absence of strong ample stability.
\end{example}

We show now an example of a moduli space that is rigid
even though the inequality~\eqref{equation:teleman-inequality} is not satisfied.
\begin{example}
  \label{example:no-teleman-still-rigid}
  Let $Q$ be the 3-vertex quiver
  \begin{equation}
    \label{equation:3-vertex-quiver-2}
    Q\colon
    \begin{tikzpicture}[baseline = -20pt, node distance = 1.5cm]
      \node (1)                {};
      \node (2) [right of = 1] {};
      \node (3) [below of = 2] {};
      \draw (1) circle (2pt) node[above] {1};
      \draw (2) circle (2pt) node[above] {2};
      \draw (3) circle (2pt) node[below] {3};
      \draw[->]                  (1) edge (2);
      \draw[->, bend left = 25]  (1) edge (3);
      \draw[->, bend left = 15]  (1) edge (3);
      \draw[->, bend left = 5]   (1) edge (3);
      \draw[->, bend right = 5]  (1) edge (3);
      \draw[->, bend right = 15] (1) edge (3);
      \draw[->, bend right = 30] (1) edge (3);
      \draw[->]                  (2) edge (3);
    \end{tikzpicture}
  \end{equation}
  Let $\tuple{d} = (1,6,6)$ and let $\theta = \theta_\can = (42,5,-12)$ be the canonical stability parameter.
  We have again that~$\tuple{d}$ is~$\theta$\dash coprime, and that the $\theta$-stable locus is non-empty.

  The Harder--Narasimhan stratification of the representation variety contains 85~strata:
  84~unstable strata, plus the dense stratum of stables.
  The stratum of HN-type $((0,1,0), (1,5,6))$ has codimension~$1$,
  so $\tuple{d}$ is not amply~$\theta$-stable.
  The condition in~\eqref{equation:teleman-inequality} does not hold,
  as the same stratum of HN-type $((0,1,0), (1,5,6))$ gives~$k_1 = 5, k_2 = -\frac{5}{12}$,
  and~$\langle (0,1,0),(1,5,6) \rangle = -1$.
  However, the resulting moduli space is $\mathbb{P}^6$ by \cref{lemma:P6},
  for which rigidity easily follows from the Euler sequence.
\end{example}

\begin{lemma}
  \label{lemma:P6}
  For~$Q,\tuple{d},\theta$ as in \cref{example:no-teleman-still-rigid}
  there exists an isomorphism
  \begin{equation}
    \modulispace[\theta\stable]{Q,\tuple{d}}\cong\mathbb{P}^6.
  \end{equation}
\end{lemma}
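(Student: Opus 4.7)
The plan is to exhibit the moduli space as a concrete GIT quotient that manifestly equals $\mathbb{P}^6$, by exploiting the specific shape of~$Q$. The starting observation is that in any $\theta$-semistable representation of~$\tuple{d}$, the linear map $h\colon V_2\to V_3$ attached to the arrow $2\to 3$ must be an isomorphism: if $\ker h\neq 0$, then $(0,\ker h,0)$ is a proper nonzero subrepresentation of $\theta$\dash weight $5\dim\ker h>0$, contradicting semistability, and since $\dim V_2=\dim V_3=6$ injectivity of~$h$ already forces it to be an isomorphism. Using the two $\GL_6$\dash factors of~$\group$ acting on $(V_2,V_3)$, I normalize $h=\mathrm{id}$; the stabilizer of this normalization is the diagonal $\GL_6\subset\GL_6\times\GL_6$.

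After this normalization the remaining data is a vector $g\in\field^6$ from the arrow $1\to 2$ together with six vectors $f_1,\ldots,f_6\in\field^6$ from the six arrows $1\to 3$. I assemble them into a single $6\times 7$ matrix
\begin{equation*}
  M=(g\mid f_1\mid \cdots\mid f_6).
\end{equation*}
The residual group $\GL_1\times\GL_6$ acts on~$M$ by $(t,A)\cdot M=t^{-1}AM$, and since the scalar matrices lie in~$\GL_6$, the $\GL_1$\dash factor is absorbed. The effective quotient is thus left multiplication by~$\GL_6$ on the space of $6\times 7$ matrices.

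The main technical step is to identify the $\theta$\dash stable locus with the locus $\{\mathrm{rank}(M)=6\}$. If $\mathrm{rank}(M)<6$, then $W_2=W_3=\mathrm{Im}(M)\subsetneq\field^6$ of some dimension $w\leq 5$ gives a proper subrepresentation $(1,w,w)$ with $\theta$\dash weight $42-7w\geq 7>0$, which destabilizes. Conversely, if $\mathrm{rank}(M)=6$, I check that every proper subrepresentation has strictly negative $\theta$\dash weight. For subreps with $w_1=0$ the condition $h(W_2)\subseteq W_3$ forces $W_2\subseteq W_3$, so $\theta(W)=5w_2-12w_3\leq -7w_3<0$. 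For subreps with $w_1=1$, the containments $g\in W_2$, $f_i\in W_3$ for all~$i$, and $W_2\subseteq W_3$ imply $W_3\supseteq\mathrm{Im}(M)=\field^6$, whence $w_3=6$ and $\theta(W)=42+5w_2-72<0$ for $w_2<6$.

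Finally, the assignment $M\mapsto\ker M\subset\field^7$ is $\GL_6$\dash invariant and descends to a morphism $\modulispace[\theta\stable]{Q,\tuple{d}}\to\mathbb{P}(\field^7)=\mathbb{P}^6$, which is an isomorphism since every one\dash dimensional subspace $L\subset\field^7$ arises uniquely up to left $\GL_6$\dash action as the kernel of the surjection $\field^7\twoheadrightarrow\field^7/L\cong\field^6$. The main obstacle is the careful case analysis verifying the stability criterion in the third paragraph; everything else is routine bookkeeping, and the identification $\mathrm{Gr}(6,7)\cong\mathbb{P}^6$ at the end is standard.
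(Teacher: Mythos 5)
Your proof is correct and follows essentially the same route as the paper: both reduce stability to the condition that the $6\times 7$ matrix assembled from the arrows out of vertex~$1$ has rank~$6$, and then send a representation to the kernel of that matrix to land in $\mathbb{P}^6$. The only cosmetic difference is that you first normalize the arrow $2\to 3$ to the identity and quotient by the residual diagonal $\GL_6$, whereas the paper keeps $A$ and works with the manifestly $\group$\dash invariant matrix $(Av\mid w_1\mid\dots\mid w_6)$; your case analysis of the destabilizing subrepresentations is, if anything, slightly more complete.
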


\begin{proof}
  The data of a representation $M$ consists of
  one vector~$v$ in~$M_2\cong\field^6$,
  six vectors~$w_1,\dots,w_6$ in~$M_3\cong\field^6$,
  and one endomorphism of~$\field^6$ that we denote by~$A \colon \field^6 \to \field^6$.
  Let us consider the conditions on this data for $M$ to be $\theta$-stable.
  If $A$ has a nontrivial kernel, then $M$ admits a subrepresentation of dimension vector $(0,\dim_\field\ker A,0)$ or $(1,\dim_\field\ker A,0)$,
  and both have positive slope.
  Thus $A$ must be injective for~$M$ to be~$\theta$-stable.
  If $Av,w_1,\dots,w_6$ together do not span $M_3$,
  then there is a subrepresentation of dimension vector $(1,6,5)$ which has positive slope.
  Therefore, the $6 \times 7$-matrix
  \begin{equation}
    F_{M} \colonequals\left( Av~|~w_1~|~\dots~|~w_6 \right)
  \end{equation}
  must have rank $6$.
  Using this description we can see that
  the moduli space can be identified to $\mathbb{P}^6$.
  To do so, we send the data of a semistable representation $M$ to
  the kernel of the linear map $F_M \colon \field^7 \to \field^6$,
  which is a line in~$\field^7$.
\end{proof}

\subsection{Height-zero moduli spaces}
\label{subsection:height-zero}
Let us recall some aspects of the moduli theory of semistable sheaves on~$\mathbb{P}^2$.
We will denote
the moduli space of Gieseker-semistable sheaves with given rank and first (resp.~second) Chern class
by~$\moduli_{\mathbb{P}^2}(r,\mathrm{c}_1,\mathrm{c}_2)$.
In~\cite{MR0916199} a function~$\delta\colon\mathbb{Q}\to\mathbb{Q}$ is constructed,
for which it is shown that~$\moduli_{\mathbb{P}^2}(r,\mathrm{c}_1,\mathrm{c}_2)$ has strictly positive dimension
if and only if
\begin{equation}
  \delta\left( \frac{\mathrm{c}_1}{r} \right)
  \leq
  \frac{1}{r}\left( \mathrm{c}_2 - \left( 1-\frac{1}{r} \right)\frac{\mathrm{c}_1^2}{2} \right).
\end{equation}
When it is an \emph{equality},
the moduli space is said to be of \emph{height zero}.

It is also shown that for moduli spaces of strictly positive dimension
there exists a unique exceptional vector bundle.
Writing~$\mu\colonequals\frac{\mathrm{c}_1}{r}$ for the slope of the sheaves parametrized by~$\moduli_{\mathbb{P}^2}(r,\mathrm{c}_1,\mathrm{c}_2)$,
we will denote this associated exceptional vector bundle by~$E_\mu$.

As explained in~\cite{MR0916199},
moduli spaces of height zero have favourable properties.
The one which is relevant to us is the following identification~\cite[Th\'eor\`eme~2]{MR0916199}.

\begin{theorem}[Drezet]
  \label{theorem:identification}
  There exists a natural isomorphism
  \begin{equation}
    \moduli_{\mathbb{P}^2}(r,\mathrm{c}_1,\mathrm{c}_2)
    \cong
    \modulispace[\theta_\can\semistable]{\mathrm{K}_{3r_\mu},(m,n)}
  \end{equation}
  where~$r_\mu$ is the rank of the of the associated exceptional vector bundle~$E_\mu$,
  $\mathrm{K}_{3r_\mu}$ is the Kronecker quiver with~$3r_\mu$ arrows,
  and the dimension vector~$(m,n)$ is determined as in~\cite[\S IV.2]{MR0916199}.
\end{theorem}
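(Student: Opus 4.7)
The plan is to reproduce Drezet's construction via an exceptional-pair tilting from~$\mathbb{P}^2$ to the Kronecker quiver~$\mathrm{K}_{3r_\mu}$. From the Drezet--Le Potier classification of exceptional bundles on~$\mathbb{P}^2$, the bundle~$E_\mu$ sits inside a helix of exceptional bundles; from this helix one extracts an exceptional pair~$(E_1, E_2)$ neighbouring~$E_\mu$ and satisfying~$\dim\Hom(E_1, E_2) = 3r_\mu$ together with~$\Ext^{\geq 1}(E_1, E_2) = 0$. The endomorphism algebra of~$E_1 \oplus E_2$ is then Morita-equivalent to the path algebra of~$\mathrm{K}_{3r_\mu}$, which is what ultimately produces the target quiver with the correct number of arrows.

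Next, I would define the functor
\begin{equation}
  \Phi \colon F \longmapsto \bigl(\Hom(E_1, F), \Hom(E_2, F)\bigr),
\end{equation}
with the~$3r_\mu$ Kronecker maps obtained from the natural action of~$\Hom(E_1, E_2)$ by composition. The dimensions~$m = \dim\Hom(E_1, F)$ and~$n = \dim\Hom(E_2, F)$ are then read off via Hirzebruch--Riemann--Roch from~$(r,\mathrm{c}_1,\mathrm{c}_2)$, matching the combinatorial recipe of~\cite[\S IV.2]{MR0916199}, \emph{provided} the higher Ext groups~$\Ext^{\geq 1}(E_j, F)$ vanish for every Gieseker semistable~$F$ with these invariants.

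The main obstacle is exactly this uniform Ext-vanishing across the whole moduli space, and this is precisely where the height zero hypothesis does the work: equality in the Drezet--Le Potier inequality places the Chern character of~$F$ in the ``Beilinson window'' of the pair~$(E_1, E_2)$, where the slope bounds arising from the exceptionality of~$E_1, E_2$, combined with Serre duality on~$\mathbb{P}^2$, force~$\Ext^{\geq 1}(E_j, F) = 0$ for every semistable~$F$ and both~$j = 1, 2$. Once this is settled, I would check that Gieseker semistability of~$F$ is equivalent to~$\theta_\can$-semistability of~$\Phi(F)$: a destabilizing subsheaf translates directly into a~$\theta_\can$-destabilizing subrepresentation via the same slope inequality computed through the Chern character, and conversely any subrepresentation of~$\Phi(F)$ arises from a subsheaf via the inverse functor, which sends a Kronecker representation~$\phi$ to the cohomology of the two-term complex~$E_1^{\oplus m} \xrightarrow{\phi} E_2^{\oplus n}$ on~$\mathbb{P}^2$. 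Verifying that this cohomology sheaf has the prescribed invariants~$(r,\mathrm{c}_1,\mathrm{c}_2)$ is a Chern character computation, and the mutual inverseness of~$\Phi$ and the complex construction upgrades the bijection on isomorphism classes to the desired isomorphism of moduli spaces.
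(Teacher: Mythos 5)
The paper does not prove this statement: it is imported verbatim from Drezet, with the precise reference \cite[Th\'eor\`eme~2]{MR0916199}, and is used as a black box in the proof of \cref{corollary:height-zero}. So there is no internal proof to compare your attempt against; what you have written is a reconstruction of Drezet's own argument, and in broad outline it does follow his strategy (exceptional bundles on~$\mathbb{P}^2$, a Beilinson/tilting-type description of semistable sheaves by Kronecker modules, and a comparison of the two stability notions).

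As a proof, however, your sketch defers exactly the points where the real content lies. First, the uniform vanishing~$\Ext^{\geq 1}(E_j,F)=0$ for \emph{every} Gieseker-semistable~$F$ with the given invariants is not a formal consequence of ``being in the Beilinson window''; in Drezet's work it rests on hard vanishing theorems for morphisms between exceptional bundles and semistable sheaves, governed by the Drezet--Le Potier boundary function~$\delta$, and this is precisely what the height-zero equality feeds into. You name the issue but do not resolve it. Second, the claim that a destabilizing subsheaf ``translates directly'' into a destabilizing subrepresentation (and conversely) is where most of the technical work sits: the functor~$\Phi$ is not an equivalence of abelian categories, so subobjects on the two sides do not match up naively, and one must control which subrepresentations of~$\Phi(F)$ come from subsheaves. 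Third, an isomorphism of moduli \emph{spaces} (GIT quotients) requires carrying the construction out in families and comparing linearizations, not just exhibiting a bijection on closed points. None of these steps is wrong in spirit, but each is asserted rather than proved, so the proposal is an outline of Drezet's proof rather than a proof.
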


The isomorphism restricts to an isomorphism of the stable loci,
i.e.,
when the dimension vector~$(m,n)$ is not coprime.
In what follows,
we only consider invariants~$(r,\mathrm{c}_1,\mathrm{c}_2)$
such that every semistable sheaf is stable,
and thus~$(m,n)$ will be coprime.

For Kronecker quivers,~\cite[Proposition 6.2]{MR3683503} shows that strong ample stability holds.
The cited statement only mentions ample stability,
but the proof shows strong ample stability,
as introduced earlier in~\cite[Proposition 5.1]{MR3683503}.
We thus have the following lemma.

\begin{lemma}
  \label{lemma:rigidity-kronecker-quivers}
  Let $Q$ be a Kronecker quiver,
  and let~$\tuple{d}$ be a dimension vector
  which is~$\theta_\can$-coprime (and thus indivisible).
  Then~$\modulispace[\theta\semistable]{Q,\tuple{d}}$ is rigid.
\end{lemma}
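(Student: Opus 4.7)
The plan is to verify that the three hypotheses of \cref{corollary:rigidity} are all satisfied for a Kronecker quiver $Q$ equipped with a $\theta_\can$-coprime dimension vector $\tuple{d}$, and then invoke it. Since $\tuple{d}$ is $\theta_\can$-coprime, every $\theta_\can$-semistable representation is $\theta_\can$-stable, so $\modulispace[\theta_\can\semistable]{Q,\tuple{d}} = \modulispace[\theta_\can\stable]{Q,\tuple{d}}$, and the statement to prove is exactly that the latter is rigid.

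First I would observe that a Kronecker quiver has two vertices and all arrows oriented in the same direction, so it is acyclic; this settles the acyclicity hypothesis of \cref{corollary:rigidity}. The $\theta_\can$-coprimality is given as part of the assumption of the lemma.

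Next, the only remaining hypothesis to check is strong ample stability (\cref{definition:strong-ample-stability}). This is where I would appeal to the cited result: the proof of~\cite[Proposition~6.2]{MR3683503}, while phrased as a statement about ample stability, actually establishes the stronger bound $\langle \tuple{e}, \tuple{d}-\tuple{e}\rangle \leq -2$ for every proper subdimension vector $\tuple{e}$ with $\mu(\tuple{e}) > \mu(\tuple{d}-\tuple{e})$, by direct inspection of the Euler form of a Kronecker quiver. This is exactly the content of strong ample stability as introduced in~\cite[Proposition~5.1]{MR3683503} and recalled in \cref{definition:strong-ample-stability}.

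With all three hypotheses verified, \cref{corollary:rigidity} applies and yields
\begin{equation}
  \HH^{\geq 1}(\modulispace[\theta_\can\stable]{Q,\tuple{d}},\tangent_{\modulispace[\theta_\can\stable]{Q,\tuple{d}}})=0,
\end{equation}
so in particular rigidity holds. I do not expect any genuine obstacle here; the lemma is essentially a bookkeeping corollary packaging the Kronecker case for use in \cref{subsection:height-zero}. The only subtle point is making sure that the reader can locate the strong (as opposed to merely ample) stability statement inside the cited proof, which the surrounding text already flags explicitly.
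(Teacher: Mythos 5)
Your proposal is correct and matches the paper's own argument: the paper likewise notes that Kronecker quivers are acyclic and that the proof of~\cite[Proposition~6.2]{MR3683503} establishes strong ample stability (not merely ample stability), and then the lemma follows from \cref{corollary:rigidity}. The paper leaves this as an immediate consequence of the preceding discussion rather than writing out a formal proof, but the content is identical to yours.
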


To prove \cref{corollary:height-zero},
we wish to obtain a contradiction on the assumption that~\eqref{equation:height-zero-functor} is fully faithful.
The combination of~\cite[Proposition~29, Remark~30]{MR3950704} states the following
for fully faithful functors towards moduli spaces of sheaves on surfaces.
\begin{theorem}[Belmans--Fu--Raedschelders]
  \label{theorem:fully-faithful-hochschild-cohomology}
  Let~$S$ be a smooth projective surface
  such that~$\mathcal{O}_S$ is expectional,
  i.e.,~$\HH^1(S,\mathcal{O}_S)=\HH^2(S,\mathcal{O}_S)=0$.
  Let~$\moduli_S(r,\mathrm{c}_1,\mathrm{c}_2)$ be a smooth projective moduli space
  of stable sheaves on~$S$ with prescribed invariants.
  Let~$\mathcal{E}$ be the universal sheaf.
  Assume that the Fourier--Mukai functor
  \begin{equation}
    \Phi_{\mathcal{E}}\colon\derived^\bounded(S)\to\derived^\bounded(\moduli_S(r,\mathrm{c}_1,\mathrm{c}_2))
  \end{equation}
  is fully faithful.
  Then
  \begin{equation}
    \HH^i(\moduli_S(r,\mathrm{c}_1,\mathrm{c}_2),\tangent_{\moduli_S(r,\mathrm{c}_1,\mathrm{c}_2)})
    \cong
    \hochschild^{i+1}(S).
  \end{equation}
\end{theorem}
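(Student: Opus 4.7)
The plan is to analyze the universal sheaf $\mathcal{E}$ on~$S\times\moduli_S(r,\mathrm{c}_1,\mathrm{c}_2)$ by combining the Fourier--Mukai formalism with the trace decomposition for its endomorphism sheaf. Write $M\colonequals\moduli_S(r,\mathrm{c}_1,\mathrm{c}_2)$ and let~$\pi_M\colon S\times M\to M$ denote the projection onto the second factor.

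First I would translate the fully faithfulness hypothesis into a computation of $\Ext^*_{S\times M}(\mathcal{E},\mathcal{E})$. For any Fourier--Mukai kernel $P\in\derived^\bounded(X\times Y)$ one has a canonical identification
\begin{equation}
  \Ext^*_{X\times Y}(P,P)\cong\mathrm{Nat}^*(\Phi_P,\Phi_P)
\end{equation}
of graded natural transformations, obtained through the DG-enhancement of the derived category and To\"en's description of functor categories as DG-modules over a tensor product. Fully faithfulness of~$\Phi_P$ specializes the right-hand side to $\mathrm{Nat}^*(\mathrm{id}_{\derived^\bounded(X)},\mathrm{id}_{\derived^\bounded(X)})=\hochschild^*(X)$, since a natural transformation of a fully faithful functor is determined by a natural transformation of the identity on its source. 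Applied in our situation, this produces the identification $\Ext^*_{S\times M}(\mathcal{E},\mathcal{E})\cong\hochschild^*(S)$.

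Next I would compute the same Ext group geometrically. The trace splitting for a universal family of simple sheaves on a surface gives
\begin{equation}
  \mathrm{R}\pi_{M*}\RsheafHom(\mathcal{E},\mathcal{E})\cong\mathrm{R}\pi_{M*}\mathcal{O}_{S\times M}\oplus\mathrm{R}\pi_{M*}\RsheafHom_0(\mathcal{E},\mathcal{E}),
\end{equation}
where the subscript~$0$ denotes the trace-free summand. The first summand reduces to $\mathcal{O}_M$ concentrated in degree~$0$ by the hypothesis that $\mathcal{O}_S$ is exceptional. The Atiyah class of~$\mathcal{E}$ produces a Kodaira--Spencer isomorphism $\tangent_M\cong\mathrm{R}^1\pi_{M*}\RsheafHom_0(\mathcal{E},\mathcal{E})$ because~$M$ is a fine moduli space parameterising simple sheaves, while smoothness of~$M$ together with Serre duality on~$S$ force the remaining $\mathrm{R}^i\pi_{M*}\RsheafHom_0(\mathcal{E},\mathcal{E})$ to vanish. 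Thus $\mathrm{R}\pi_{M*}\RsheafHom(\mathcal{E},\mathcal{E})\cong\mathcal{O}_M\oplus\tangent_M[-1]$.

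Taking hypercohomology on~$M$ and combining with the first step yields
\begin{equation}
  \hochschild^n(S)\cong\HH^n(M,\mathcal{O}_M)\oplus\HH^{n-1}(M,\tangent_M)
\end{equation}
for every~$n\geq 0$. Verifying that $\HH^{\geq 1}(M,\mathcal{O}_M)=0$, which holds whenever~$M$ is rational and in the height-zero setting follows from the Drezet--Kronecker description via \cref{theorem:identification,lemma:rigidity-kronecker-quivers}, then isolates the second summand and delivers the desired isomorphism $\HH^i(M,\tangent_M)\cong\hochschild^{i+1}(S)$ for all $i\geq 0$. The main obstacle is the first step: passing from kernel-level Ext groups to natural transformations of the Fourier--Mukai functor requires a careful DG-enhancement, because Hom spaces in $\derived^\bounded$ do not a priori carry the structure needed to argue with natural transformations; once this formalism is in place, the remaining steps reduce to standard cohomological bookkeeping via the Leray spectral sequence and the trace decomposition.
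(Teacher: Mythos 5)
The paper does not prove this statement: it is imported wholesale from \cite[Proposition~29, Remark~30]{MR3950704}, so there is no internal proof to compare against. Your reconstruction is in fact the standard proof of the cited result, and its skeleton is sound: fully faithfulness gives $\Ext^\bullet_{S\times M}(\mathcal{E},\mathcal{E})\cong\hochschild^\bullet(S)$ (though the cleaner route is via adjoint kernels, $\Ext^\bullet_{S\times M}(\mathcal{E},\mathcal{E})\cong\Ext^\bullet_{S\times S}(\mathcal{O}_\Delta,\mathcal{E}^{\mathrm{L}}\star\mathcal{E})\cong\hochschild^\bullet(S)$, which avoids invoking To\"en's theorem), and the relative $\Ext$ computation via the trace splitting and $\mathrm{R}\pi_{M*}\mathcal{O}_{S\times M}\cong\mathcal{O}_M$ is correct.

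Two steps need more than you give them. First, $\mathrm{R}^2\pi_{M*}\RsheafHom_0(\mathcal{E},\mathcal{E})=0$ does \emph{not} follow from smoothness of $M$: smoothness says the obstructions vanish, not that the obstruction space $\Ext^2_S(E,E)_0$ does. You need the pointwise vanishing $\Ext^2_S(E,E)_0=0$, which for $S=\mathbb{P}^2$ follows from Serre duality and stability (as $\omega_{\mathbb{P}^2}$ is antiample, $\Hom(E,E\otimes\omega_S)=0$), but which is a genuine extra input for an arbitrary surface with exceptional structure sheaf (e.g.\ of general type with $p_g=q=0$). Second, and more importantly, your argument only yields
\begin{equation}
  \hochschild^{i+1}(S)\cong\HH^{i+1}(M,\mathcal{O}_M)\oplus\HH^i(M,\tangent_M),
\end{equation}
so the stated conclusion requires $\HH^{\geq 1}(M,\mathcal{O}_M)=0$, which is not among the hypotheses of the theorem. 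You notice this and supply it in the height-zero application via rationality of the Kronecker moduli space, which is exactly how the result is used in \cref{subsection:height-zero}; but as a proof of the theorem as literally stated this is a gap, or rather it exposes a missing hypothesis in the statement (in the source, this vanishing is automatic for $\Hilb^nS$ by Fogarty's computation of $\HH^\bullet(\Hilb^nS,\mathcal{O})$, and Remark~30 of op.~cit.\ tacitly carries it along). If you add $\HH^{\geq1}(M,\mathcal{O}_M)=0$ and the unobstructedness $\Ext^2_S(E,E)_0=0$ as hypotheses, your proof is complete.
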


\begin{proof}[Proof of \cref{corollary:height-zero}]
  For~$\mathbb{P}^2$,
  the Hochschild--Kostant--Rosenberg decomposition gives us the isomorphism~%
  $\hochschild^2(\mathbb{P}^2)\cong\HH^0(\mathbb{P}^2,\omega_{\mathbb{P}^2}^\vee)\cong\field^{10}$.
  By the identification from \cref{theorem:identification}
  and the assumption that the moduli spaces are smooth
  we have that
  \begin{equation}
    \label{equation:first-cohomology-tangent-bundle-identification}
    \HH^1(\moduli_{\mathbb{P}^2}(r,\mathrm{c}_1,\mathrm{c}_2),\tangent_{\moduli_{\mathbb{P}^2}(r,\mathrm{c}_1,\mathrm{c}_2)})
    \cong
    \HH^1(\modulispace[\theta_\can]{\mathrm{K}_{3r_\mu},(m,n)},\tangent_{\modulispace[\theta_\can]{\mathrm{K}_{3r_\mu},(m,n)}}).
  \end{equation}
  But by \cref{corollary:rigidity},
  which we can apply because of \cref{lemma:rigidity-kronecker-quivers},
  the cohomology in~\eqref{equation:first-cohomology-tangent-bundle-identification} vanishes,
  which contradicts \cref{theorem:fully-faithful-hochschild-cohomology}.
\end{proof}

The following remark gives an alternative method to prove
the functor cannot be fully faithful,
using techniques introduced already by Drezet.
\begin{remark}
  \label{remark:associated-exceptional-bundle}
  Consider~$\moduli_{\mathbb{P}^2}(r,\mathrm{c}_1,\mathrm{c}_2)$ of height zero.
  As explained in~\cite[\S5.1]{MR0916199},
  there exists a unique associated exceptional vector bundle~$E$
  for which
  \begin{equation}
    \label{equation:inequalities}
    -3<\frac{\mathrm{c}_1H}{r}+\frac{\mathrm{c}_1(E)H}{\rk E}\leq 0.
  \end{equation}
  As mentioned in loc.~cit.,
  this implies that
  \begin{equation}
    \chi(\mathbb{P}^2,E^\vee\otimes V)=\operatorname{ht}(\moduli_{\mathbb{P}^2}(r,\mathrm{c}_1,\mathrm{c}_2))=0
  \end{equation}
  for all~$[V]\in\moduli_{\mathbb{P}^2}(r,\mathrm{c}_1,\mathrm{c}_2)$.
  But then we can prove that~$\Phi_{\mathcal{U}}(E^\vee)=0$,
  contradicting fully faithfulness.
  Namely, fiberwise we see for the transform of~$E^\vee$ that
  \begin{equation}
    \HH^\bullet(\mathbb{P}^2,(p^*E^\vee\otimes\mathcal{U})_{\mathbb{P}^2\times[V]})
    \cong
    \HH^\bullet(\mathbb{P}^2,E^\vee\otimes V)
    =
    0,
  \end{equation}
  as both~$\Hom(E,V)$ and~$\Ext^2(E,V)=\Hom(V,E\otimes\mathcal{O}_{\mathbb{P}^2}(3))^\vee$ vanish,
  by the inequalities~\eqref{equation:inequalities}.
\end{remark}

\printbibliography

\emph{Pieter Belmans}, \url{pieter.belmans@uni.lu} \\
Department of Mathematics, Universit\'e de Luxembourg, 6, avenue de la Fonte, L-4364 Esch-sur-Alzette, Luxembourg

\emph{Ana-Maria Brecan}, \url{anabrecan@gmail.com}

\emph{Gianni Petrella}, \url{gianni.petrella@uni.lu} \\
Department of Mathematics, Universit\'e de Luxembourg, 6, avenue de la Fonte, L-4364 Esch-sur-Alzette, Luxembourg

\emph{Hans Franzen}, \url{hans.franzen@math.upb.de} \\
Institute of Mathematics, Paderborn University, Warburger Stra\ss e 100, 33098 Paderborn, Germany

\emph{Markus Reineke}, \url{markus.reineke@rub.de} \\
Fakultat f\"ur Mathematik, Ruhr-Universit\"at Bochum, Universit\"atsstra\ss e 150, 44780 Bochum, Germany

\end{document}